 \newtheorem{theorem}{Theorem}
 \theoremstyle{definition}
 \newtheorem{example}{Example}
 \newcommand{\R}{\mathbb{R}}
 \title{A Reduction Algorithm for Volterra Integral Equations}
\author{Richard Gustavson}
\address{Department of Mathematics, Manhattan College, Riverdale, NY 10471, United States}
\email{rgustavson01@manhattan.edu}
\author{Sarah Rosen}
\address{Department of Mathematics, Manhattan College, Riverdale, NY 10471, United States}
\email{srosen01@manhattan.edu}
 \date{\today}
\begin{document}

\subjclass[2020]{
45D05,  
45P05,   
05C05,   
05C85,   
17B38   
}

\keywords{Integral equation, iterated integral, Volterra operator, rooted trees, reduction algorithm}

\maketitle

\begin{center}
    \textbf{Abstract}
\end{center}

An integral equation is a way to encapsulate the relationships between a function and its integrals.  We develop a systematic way of describing Volterra integral equations -- specifically an algorithm that reduces any separable Volterra integral equation into an equivalent one in operator-linear form, i.e. one that only contains iterated integrals.  This serves to standardize the presentation of such integral equations so as to only consider those containing iterated integrals.  We use the algebraic object of the integral operator, the twisted Rota-Baxter identity, and vertex-edge decorated rooted trees to construct our algorithm.

\section{Introduction}
Integral equations lie at an intersection of interest among pure and applied mathematicians. Practically, they represent the relationship between a function and its integrals, and they prove to be a useful tool in modeling real-world situations~\cite{Wazwaz, Zemyan}.  Applications of integral equations are incredibly diverse, as they are the natural analog of more familiar differential equations. Integral equations appear naturally in physics, in the form of the Electric Field Integral equation (and Magnetic Field Integral equation), as well as diffraction/scattering problems of both light and quantum particles~\cite{Brown, Olshevsky}. The Ornstein–Zernike integral equation, too, has applications in chemistry by describing the motion of fluids on a molecular level~\cite{Aebersold}.  In particular, iterated integrals have applications to, for example, topology, number theory, and quantum field theory~\cite{Chen1,Chen2}.  An important question to ask is when a given integral equation is equivalent to one containing only iterated integrals.

Integral equations in the context of analysis are used as a tool to work with infinities. Integrals are, after all, the sum of infinitely many infinitely small increments of the area under a certain function.  By considering an integral operator, that is, a function $I:C(\R) \to C(\R)$ defined by
\[
I(f)(x) := \int_a^xf(t)\,dt,
\]
we are able to work with integrals in a discrete sense. Using only algebraic properties and operations, we are able to simplify complicated combinations of integrals.  Such algebraic study of integral equations has been more recent.  One approach is using the Rota-Baxter operator, which is a generalization of the standard integral operator~\cite{Guo}.  These operators fail to represent integral equations with general kernels.  In~\cite{GGL}, an algebraic framework for arbitrary integral equations was produced using decorated rooted trees.  In addition, it is shown using non-constructive means that any Volterra integral equation with separable kernels is equivalent to one that is operator linear, that is, contains only iterated integrals.  In this paper, we construct an algorithm that performs this reduction.  This algorithm works by considering a separable Volterra integral equation as a decorated rooted tree, then performing the reduction on the tree.  The output tree represents the equivalent operator-linear form of the original integral equation.

\section{Background on Integral Equations}

An integral equation is an equation in which an unknown function, $u(x)$, depends on the integral of itself.
The most standard integral equation is of the form
\[
    u(x) = f(x) + \lambda \int_{a(x)}^{b(x)} K(x,t)u(t)\,dt
\]
where $a(x)$ and $b(x)$ are the limits of integration, $\lambda$ is a constant parameter, and $K(x,t)$ is a known function called the {\bf kernel}.  If the limits of integration are fixed, the integral equation is called a {\bf Fredholm} integral equation, whereas if at least one limit is a variable, the equation is called a {\bf Volterra} integral equation.  Throughout this paper, we will focus our discussion on Volterra integral equations with fixed lower limit 0 and upper limit $x$.  Examples of such integral equations are
\begin{align}
f(x) &= \left(\int_0^x e^{-x+t} f(t)\,dt\right)\left(\int_0^x \cos(t)g(t)\,dt\right) \label{notoperatorlinear}\\
f(x) &= \int_0^x \sin(x-t)(f(t))^2\left(\int_0^t e^{tu}g(u)\,du\right)\,dt. \label{operatorlinear}
\end{align}

An integral equation is said to be {\bf operator linear} if it does not contain any products of integrals.  For example, Eq.~\eqref{notoperatorlinear} is not operator linear, since the two integrals are multiplied together, whereas Eq.~\eqref{operatorlinear} is operator linear, as the integrals are iterated.

In order to study integral equations algebraically, we introduce the {\bf (Volterra) integral operator} $P_K:C(\R) \to C(\R)$ defined by
\begin{equation}\label{eq:intop}
P_K(f)(x) := \int_0^x K(x,t)f(t)\,dt.
\end{equation}
There is a Volterra integral operator corresponding to each kernel $K(x,t)$.  For example, Eq.~\eqref{notoperatorlinear} can be written as
\begin{equation}\label{noloperator}
f=P_{K_1}(f)P_{K_2}(g),
\end{equation}
where
$K_1(x,t) = e^{-x+t}$ and $K_2(x,t) = \cos(t)$, while Eq.~\eqref{operatorlinear} can be written as
\begin{equation}\label{oloperator}
f=P_{K_3}(f^2P_{K_4}(g)),
\end{equation}
where $K_3(x,t) = \sin(x-t)$ and $K_4(x,t) = e^{xt}$.  Note how the variables used in $K_4$ change in Eq.~\eqref{operatorlinear}; this is due to the iterated nature of the integrals.  One of the advantages of writing integral equations in operator form, as in Eqs.~\eqref{noloperator} and~\eqref{oloperator}, is that the specific variable names get absorbed in the operators.  In general, we study integral equations algebraically by moving all integral operators to one side of the equation and setting this equal to zero.  For example, Eq.~\eqref{noloperator} becomes $P_{K_1}(f)P_{K_2}(g)-f=0$.  In this case the left-hand side of such an equation is called an {\bf integral polynomial}.  We say two integral equations are {\bf equivalent} if they have the same solution set.  For example, the two equations
\begin{align}
f &= \left(\int_0^xf(t)\,dt\right)\left(\int_0^xg(t)\,dt\right) \label{product}\\
f &= \int_0^x f(t)\left(\int_0^tg(u)\,du\right)\,dt + \int_0^xg(t)\left(\int_0^tf(u)\,du\right) \label{IBP}\,dt
\end{align}
are equivalent since the right-hand sides of the equations are equal using integration by parts.  In a similar manner, we can define when two integral polynomials are equivalent.  Note that Eq.~\eqref{product} is not operator linear, whereas Eq.~\eqref{IBP} is, showing in this particular case that an integral equation that is not operator linear is equivalent to an operator linear equation.

An integral operator is said to be {\bf separable} if the kernel is separable, that is, if it can be written as a product of two single-variable functions $K(x,t) = k(x)h(t)$.  For example, $K_1(x,t) = e^{-x+t} = e^{-x}e^t$ above is separable, whereas $K_4(x,t) = e^{xt}$ above is not separable.  An integral equation is separable if every integral operator in the equation is.  Our goal in this paper is to construct an algorithm that will transform any separable Volterra integral equation into an equivalent (that is, having the same solution set) operator linear form.  To do so, we must determine algebraic identities that are satisfied by Volterra integral operators.

A \textbf{Rota-Baxter operator} is a linear function $R:C(\R) \to C(\R)$ such that for all $f,g \in C(\R)$,
\[
R(f)R(g) = R(R(f)g) + R(f\,R(g)).
\]
If we set the kernel of the integral operator in Eq.~\eqref{eq:intop} to 1, we can define a simplified integral operator $I:C(\R) \to C(\R)$ by
\[
I(f)(x) := \int_0^x f(t)\,dt.
\]
It is straightforward to check, using the integration-by-parts identity, that $I$ is a Rota-Baxter operator \cite{Guo}.  We can also define {\bf matching Rota-Baxter operators} \cite{GGZ} to study Volterra operators with multiple kernels, all of the form $K(t)$, where $t$ is the variable of integration.

When the kernel is a function of both the limit variable $x$ and the variable of integration $t$, the integral operator $P_K$ is not a Rota-Baxter operator.  For example, let $K(x,t) = x$ and $f=g=1$, then 
    \[
    P_K(f)(x)P_K(g)(x)=x^4
    \]
    whereas 
    \[
    P_K(f\,P_K(g))(x)+P_K(P_K(f)g)(x) = \frac{2}{3}x^4.
    \]
We can generalize the concept of a Rota-Baxter operator as follows.  A \textbf{twisted Rota-Baxter operator} is a linear operator $P:C(\R) \to C(\R)$ together with an invertible element $\tau \in C(\R)$ (i.e. $1/\tau \in C(\R)$ exists), called the {\bf twist}, such that for all $f,g \in C(\R)$,
    \[
    P(f)P(g)=\tau P(\tau^{-1} f\,P(g)) + \tau P(\tau^{-1}P(f)g).
    \]

We will show that a separable Volterra integral operator is a twisted Rota-Baxter operator.  Since we want to work with integral equations having multiple different kernels, we introduce the following generalization.  Let $\Omega$ be an indexing set, and for each $\omega \in \Omega$, let $P_\omega:C(\R) \to C(\R)$ be a linear operator and $\tau_\omega \in C(\R)$ be an invertible element.  We say that the collection $P_\Omega := \{P_\omega \mid \omega \in \Omega\}$ is a {\bf matching twisted Rota-Baxter operator} with {\bf twist} $\tau_\Omega := \{\tau_\omega \mid \omega \in \Omega\}$ if for all $f,g \in C(\R)$ and all $\alpha, \beta \in \Omega$, we have
\begin{equation}\label{MTRBA}
P_\alpha(f)P_\beta(g) = \tau_\alpha P_\beta(\tau_\alpha^{-1}P_\alpha(f)g) + \tau_\beta P_\alpha(\tau_\beta^{-1}fP_\beta(g)).
\end{equation}

\begin{theorem}\label{theorem:MTRBA}
\cite{GGL}
For each $\omega \in \Omega$ an indexing set, let $K_\omega(x,t) = k_\omega(x)h_\omega(t) \in C(\R^2)$ be a separable kernel with $k_\omega(x) \neq 0$ for all $x$, and $\tau_\omega = \frac{k_\omega(x)}{k_\omega(0)} \in C(\R)$.  Let $P_\omega := P_{K_\omega}:C(\R) \to C(\R)$ be the separable Volterra integral operator defined in Eq.~\eqref{eq:intop}.  Then $P_\Omega :=\{P_\omega \mid \omega \in \Omega\}$ is a matching twisted Rota-Baxter operator with twist $\tau_\Omega$.
\end{theorem}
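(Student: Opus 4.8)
The plan is to verify Eq.~\eqref{MTRBA} directly by computing both sides as iterated integrals and checking they agree. Fix $\alpha,\beta\in\Omega$, write $K_\alpha(x,t)=k_\alpha(x)h_\alpha(t)$, $K_\beta(x,t)=k_\beta(x)h_\beta(t)$, and $\tau_\alpha(x)=k_\alpha(x)/k_\alpha(0)$, $\tau_\beta(x)=k_\beta(x)/k_\beta(0)$; note that $\tau_\alpha(0)=\tau_\beta(0)=1$ and both are invertible by hypothesis ($k_\omega(x)\neq 0$). First I would expand the left-hand side: $P_\alpha(f)(x)P_\beta(g)(x) = k_\alpha(x)k_\beta(x)\left(\int_0^x h_\alpha(s)f(s)\,ds\right)\left(\int_0^x h_\beta(t)g(t)\,dt\right)$, and rewrite this double integral over the square $[0,x]^2$ as a sum of two integrals over the triangles $\{0\le t\le s\le x\}$ and $\{0\le s\le t\le x\}$ (the diagonal has measure zero). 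This is the integration-by-parts / Fubini step that underlies the Rota--Baxter identity.

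Next I would expand the right-hand side of Eq.~\eqref{MTRBA} term by term. For the first term, $P_\beta(\tau_\alpha^{-1}P_\alpha(f)g)(x)$ unwinds to $k_\beta(x)\int_0^x h_\beta(t)\,\tau_\alpha(t)^{-1}\left(k_\alpha(t)\int_0^t h_\alpha(s)f(s)\,ds\right)g(t)\,dt$; since $\tau_\alpha(t)^{-1}k_\alpha(t) = k_\alpha(0)$ is a constant, multiplying by the outer $\tau_\alpha(x) = k_\alpha(x)/k_\alpha(0)$ gives exactly $k_\alpha(x)k_\beta(x)\int_0^x\int_0^t h_\alpha(s)h_\beta(t)f(s)g(t)\,ds\,dt$, i.e. the integral over the triangle $\{0\le s\le t\le x\}$. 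The key cancellation is that the twist $\tau_\alpha$ was chosen precisely so that $\tau_\alpha(x)\cdot\tau_\alpha(t)^{-1}k_\alpha(t)$ telescopes to $k_\alpha(x)$, pulling the limit-variable part $k_\alpha(x)$ back out front past the $\beta$-integral. By the symmetric computation, the second term $\tau_\beta P_\alpha(\tau_\beta^{-1}fP_\beta(g))(x)$ equals $k_\alpha(x)k_\beta(x)\int_0^x\int_0^s h_\alpha(s)h_\beta(t)f(s)g(t)\,dt\,ds$, the integral over the other triangle. Adding the two triangles recovers the integral over the full square, which is the left-hand side.

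I would then assemble these computations into the equality \eqref{MTRBA}, noting that all the functions involved ($f,g,h_\omega,k_\omega,\tau_\omega,\tau_\omega^{-1}$) are continuous so that Fubini applies and every intermediate expression lies in $C(\R)$. The main obstacle, such as it is, is purely bookkeeping: keeping straight which variable is the ``outer'' integration variable (the one that becomes the limit variable at the next level of iteration) versus the dummy variable, and tracking how the factors $k_\omega(x)$, $\tau_\omega$, and $\tau_\omega^{-1}$ interact — in particular confirming the telescoping identity $\tau_\omega(x)^{-1}k_\omega(x)=k_\omega(0)$ that makes the limit-variable factor behave like a constant under the integral sign. There is no analytic subtlety beyond Fubini's theorem; the content of the theorem is entirely in the choice $\tau_\omega = k_\omega(x)/k_\omega(0)$, and the proof is the verification that this choice works.
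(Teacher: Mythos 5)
Your verification is correct: splitting the square $[0,x]^2$ into the two triangles $\{0\le s\le t\le x\}$ and $\{0\le t\le s\le x\}$, together with the telescoping identity $\tau_\omega(x)\,\tau_\omega(t)^{-1}k_\omega(t)=k_\omega(x)$, is exactly what makes Eq.~\eqref{MTRBA} hold, and this is the standard argument for the result. The paper itself gives no proof here (it cites \cite{GGL}), so there is nothing to compare against beyond noting that your direct computation is the natural one — equivalently, writing $P_\omega(f)=k_\omega\, I(h_\omega f)$ reduces the claim to the ordinary integration-by-parts (Rota--Baxter) identity for $I$.
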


Given an indexing set $\Omega$ and a collection of separable kernels $K_\Omega = \{K_\omega(x,t) \mid \omega \in \Omega\}$, we will write $\int_0^x K_\omega(x,t)f(t)\,dt$ in either integral form as $\int_\omega f$ or operator form as $P_\omega(f)$, where $P_\omega :=P_{K_\omega}$ is defined as in Eq.~\eqref{eq:intop}.  Thus in integral form, Eq.~\eqref{MTRBA} becomes
\begin{equation}\label{eq:RBint}
\left(\int_\alpha f\right)\left(\int_\beta g\right) = \tau_\beta \int_\alpha \left(\tau_\beta^{-1} f\left(\int_\beta g\right)\right) + \tau_\alpha \int_\beta\left(\tau_\alpha^{-1}\left(\int_\alpha f\right)g\right).
\end{equation}

Theorem~\ref{theorem:MTRBA} is used to prove:

\begin{theorem}\label{theorem:equivalence}
\cite{GGL}
Every separable Volterra integral equation is equivalent to one that is operator linear.
\end{theorem}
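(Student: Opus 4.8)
The plan is to work with the given separable Volterra integral equation in operator form, $\Phi(u)=0$, where $\Phi$ is an integral polynomial built from the unknown $u$, known functions in $C(\R)$, the ring operations, and the separable Volterra operators $P_\omega$, $\omega\in\Omega$. Expanding, $\Phi$ is a finite sum of integral monomials, each of the form $c\,u^n\,P_{\omega_1}(M_1)\cdots P_{\omega_k}(M_k)$ with $c\in C(\R)$, $n\ge 0$, and each $M_i$ again an integral polynomial. Call a monomial \emph{top-linear} if $k\le 1$, and (recursively) \emph{operator linear} if it is top-linear and, when $k=1$, its argument $M_1$ is operator linear; a polynomial is operator linear exactly when each of its monomials is. The goal is to transform $\Phi$ into an equivalent operator-linear polynomial by repeatedly rewriting products of integrals using the matching twisted Rota--Baxter identity of Theorem~\ref{theorem:MTRBA}.

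The basic rewriting step is as follows. By Theorem~\ref{theorem:MTRBA}, $P_\Omega$ is a matching twisted Rota--Baxter operator with twists $\tau_\omega$, which are invertible in $C(\R)$. Hence whenever a monomial of $\Phi$ contains two top-level operator factors $P_\alpha(M_1)P_\beta(M_2)$, we may replace that product, using Eq.~\eqref{eq:RBint}, by
\[
\tau_\beta\,P_\alpha\!\bigl(\tau_\beta^{-1}\,M_1\,P_\beta(M_2)\bigr)+\tau_\alpha\,P_\beta\!\bigl(\tau_\alpha^{-1}\,P_\alpha(M_1)\,M_2\bigr),
\]
distribute over the remaining factors of the monomial, and re-expand into monomials. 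Since $\tau_\alpha,\tau_\beta,\tau_\alpha^{-1},\tau_\beta^{-1}\in C(\R)$ are simply absorbed into the coefficient part of the new monomials, each resulting monomial has one fewer top-level operator factor than the one we started from, at the cost of enlarging the argument of a single operator. Because Eq.~\eqref{eq:RBint} is a genuine identity in $C(\R)$, valid for all inputs, substituting any solution $u$ shows the rewritten polynomial has the same solution set; so each rewrite produces an equivalent integral equation.

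To control the rewriting I would introduce a well-founded complexity measure on integral polynomials and show that a carefully chosen rewrite step strictly decreases it, forcing termination at an operator-linear polynomial. A natural candidate is a measure that is lexicographic first in the maximal nesting depth at which operator linearity fails, and then, at that depth, in the multiset of top-level operator counts of the offending monomials; one applies the identity to a product $P_\alpha(M_1)P_\beta(M_2)$ sitting at the deepest level of failure, with $M_1,M_2$ already operator linear. Making this precise requires tracking, for every subterm, both how deeply it is nested and how many sibling operators it is multiplied against — which is exactly what the vertex--edge decorated rooted trees of this paper encode: a monomial becomes a forest, operators become internal vertices, and the rewrite becomes a local surgery on the tree that decreases an ordinal-valued weight.

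The main obstacle is precisely this termination argument. Applying Eq.~\eqref{eq:RBint} lowers the number of integrals multiplied at one level, but it feeds one of those integrals into the \emph{argument} of the other, so the product merely moves inward and the naive invariants (total number of operators, total nesting depth) are unchanged. One must therefore exhibit a genuinely well-founded order for which moving a product strictly inward is a strict decrease, and then check that finiteness of the monomial list at each stage plus well-foundedness of the order rule out infinite rewriting. Once the correct measure is in place, the process halts at a polynomial with no monomial having two top-level operator factors and with all operator arguments operator linear — that is, an operator-linear polynomial — and it is equivalent to the original equation by the preservation remark above.
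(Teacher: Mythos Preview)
Your overall plan coincides with the paper's: rewrite a product of two operators at an innermost (deepest) occurrence using the matching twisted Rota--Baxter identity, and argue termination via a well-founded complexity measure. The equivalence-preservation part of your argument is fine. The gap is in the specific measure you propose.

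You suggest ordering lexicographically first by ``the maximal nesting depth at which operator linearity fails,'' then by the multiset of top-level operator counts there. But applying Eq.~\eqref{eq:RBint} at a deepest failure with $M_1,M_2$ already operator linear does \emph{not} decrease that depth; it typically increases it. If $M_1=c\,u^n P_\gamma(M_1')$, then the rewritten argument $\tau_\beta^{-1}M_1 P_\beta(M_2)=c\,\tau_\beta^{-1}u^n\,P_\gamma(M_1')P_\beta(M_2)$ has a product of two operators one level \emph{deeper} than before, so your primary coordinate goes up, and the secondary coordinate is now being compared at a different depth. Your measure is therefore not monotone under the rewrite, and the sketch does not close.

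The paper's fix, in the tree language, is to abandon depth as the primary invariant and use instead the pair $\bigl(N(T),D(T)\bigr)$, where $N(T)$ is the number of terminal branches (equivalently, the number of leaves above branching points) and $D(T)$ is the sum of the lengths of those terminal branches. One then checks that a single application of Eq.~\eqref{eq:RBtrees} at a highest branching point produces two trees $T_1,T_2$, each satisfying either $N(T_i)<N(T)$ (when the corresponding branch has length $1$) or $N(T_i)=N(T)$ and $D(T_i)=D(T)-1$ (when it has length $>1$). Since the number of edges in every tree produced is the same as in the original tree, $D$ is globally bounded above by $E(F_0)$; hence case (b) can occur only boundedly often before case (a) is forced, and $N$ is bounded below by $1$, giving termination. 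In operator language this amounts to tracking, not how deep the offending product sits, but how many edges remain in the chains feeding into it; each rewrite shortens one of those chains by exactly one. Replacing your depth-based measure by this $(N,D)$ measure makes the rest of your plan go through.
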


The proof of Theorem~\ref{theorem:equivalence} in \cite{GGL} is non-constructive.  It is possible to transform a specific separable Volterra integral equation into an equivalent operator linear form using Eq.~\eqref{MTRBA} in an ad hoc manner.  In this paper we will present an algorithm that is guaranteed to perform this transformation, giving us a constructive proof of Theorem~\ref{theorem:equivalence}.

\section{Decorated Rooted Trees}
We now introduce the concept of a {\bf decorated rooted tree}.  For more information on trees and many of the concepts used in this section, see~\cite{Diestel}.  A {\bf rooted tree} is a finite graph with no loops that has a distinguished ``root" vertex.  The {\bf height} of a vertex in a rooted tree is the number of edges in the unique path connecting that vertex to the root vertex.  The height of a rooted tree is the maximum height among all of the vertices in the tree.  While we do not include arrows on these trees to orient direction, the convention we use is that the bottom vertex in each tree is the root of that tree.

The {\bf parent} of a vertex $v$ in a rooted tree $T$ is the vertex adjacent to $v$ on the path connecting $v$ to the root vertex.  It is clear that every non-root vertex has a unique parent, while the root vertex has no parent.  A {\bf child} of a vertex $v$ is any vertex that has $v$ as its parent.  A vertex can have any (finite) number of children.  A vertex with no children is called a {\bf leaf}.  A vertex with multiple children is called a {\bf branching point}.  Note that every rooted tree contains at least one leaf (except the tree consisting solely of the root vertex, which is never considered a leaf, and no edges).  A subtree containing no branching points is called a {\bf chain}.  In a similar way we can define an {\bf ancestor} of a vertex $v$ to be any vertex in the path from $v$ to the root, and a {\bf descendant} of $v$ to be any vertex $w$ that contains $v$ on the path from $w$ to the root.  The root vertex is an ancestor of every non-root vertex, and every non-root vertex is a descendant of the root vertex.  

For example, consider the tree $T$ below.
\[
\begin{tikzpicture}[thick]
\coordinate [label=below: $v_1$] (A) at (0,0);
\coordinate [label=left: $v_3$] (B) at (0,0.75);
\coordinate [label=right: $v_2$] (C) at (1,0.5);
\coordinate [label=left: $v_4$] (D) at (-0.75,1.5);
\coordinate [label=left: $v_6$] (E) at (-0.75,2.25);
\coordinate [label=right: $v_5$] (F) at (0.75,1.5);
\draw (C) -- (A) -- (B);
\draw (F) -- (B) -- (D) -- (E);
\foreach \n in {A,B,C,D,E,F} \node at (\n)[circle,fill,inner sep=1.5pt]{};
\coordinate [label=left: $T:$] (G) at (-1.5,1);
\end{tikzpicture}
\]

This tree has root vertex $v_1$ and leaves $v_2$, $v_5$, and $v_6$.  Vertices $v_1$ and $v_3$ are branching points.  Vertex $v_3$ has children $v_4$ and $v_5$, and has descendants $v_4$, $v_5$, and $v_6$.  Vertex $v_6$ has parent $v_4$, and has ancestors $v_1$, $v_3$, and $v_4$.

Given a vertex $v$ in a tree $T$, define $T(v)$ to be the subtree of $T$ consisting of $v$ and all of its descendants (and the edges connecting them), having $v$ as the root.  Define $B(v)$ to be the subtree of $T$ consisting of all vertices of $T$ (and the edges connecting them) {\it except} the descendants of $v$.  For example, the subtrees $T(v_3)$ and $B(v_3)$ of the tree $T$ above are:

\begin{multicols}{2}
\[
\begin{tikzpicture}[thick]
\coordinate [label=left: $v_3$] (B) at (0,0.75);
\coordinate [label=left: $v_4$] (D) at (-0.75,1.5);
\coordinate [label=left: $v_6$] (E) at (-0.75,2.25);
\coordinate [label=right: $v_5$] (F) at (0.75,1.5);
\draw (F) -- (B) -- (D) -- (E);
\foreach \n in {B,D,E,F} \node at (\n)[circle,fill,inner sep=1.5pt]{};
\coordinate [label=left: $T(v_3):$] (G) at (-1.5,1.5);
\end{tikzpicture}
\]

\[
\begin{tikzpicture}[thick]
\coordinate [label=below: $v_1$] (A) at (0,0);
\coordinate [label=left: $v_3$] (B) at (0,0.75);
\coordinate [label=right: $v_2$] (C) at (1,0.5);
\draw (C) -- (A) -- (B);
\foreach \n in {A,B,C} \node at (\n)[circle,fill,inner sep=1.5pt]{};
\coordinate [label=left: $B(v_3):$] (G) at (-1,0.5);
\end{tikzpicture}
\]

\end{multicols}

A {\bf vertex-edge decorated rooted tree} is a rooted tree whose vertices and edges are labeled by elements of some set.  In our case, we label all vertices by (possibly unknown) elements of $C(\R)$ and label all edges by elements of $\Omega$, where $\Omega$ is some fixed indexing set (representing the set of kernels).  Let $\mathcal{E}(\Omega)$ be the set of all such vertex-edge decorated rooted trees, and let $\mathfrak{E}(\Omega)$ be the set of all formal sums of elements of $\mathcal{E}(\Omega)$.  An element of $\mathfrak{E}(\Omega)$ is called a {\bf forest}.  See~\cite{GGL} for a more formal description of $\mathfrak{E}(\Omega)$.

In~\cite{GGL}, it is shown that the set of forests $\mathfrak{E}(\Omega)$ is in one-to-one correspondence with the set of integral polynomials with set of kernels $\{K_\omega \mid \omega \in \Omega\}$.  Since every integral polynomial $p$ corresponds to an integral equation $p=0$, this is equivalent to saying that $\mathfrak{E}(\Omega)$ is in one-to-one correspondence with the set of all integral equations with kernels given by the $K_\omega$.  An edge labeled by $\omega \in \Omega$ connecting parent vertex labeled by $f \in C(\R)$ and child vertex labeled by $g \in C(\R)$ corresponds to $fP_\omega(g) = f \int_\omega g$.  A chain of length greater than one corresponds to an iterated integral.  For example, with $f,g,h \in C(\R)$ and $\alpha,\beta \in \Omega$, the tree
\[
\begin{tikzpicture}[thick]
\coordinate [label=right: $f$] (A) at (0,0);
\coordinate [label=right: $g$] (B) at (0,0.75);
\coordinate [label=right: $h$] (C) at (0,1.5);
\draw (C) -- (A) -- (B);
\foreach \n in {A,B,C} \node at (\n)[circle,fill,inner sep=1.5pt]{};
\coordinate [label=left: $\alpha$] (D) at (0,0.375);
\coordinate [label=left: $\beta$] (E) at (0,1.125);
\end{tikzpicture}
\]
corresponds to the integral
\[
fP_\alpha\left(gP_\beta(h)\right) = f\int_\alpha \left(g\int_\beta h\right).
\]
A branching point in a tree corresponds to multiplying integrals.  For example, with $f_1,\dots,f_5 \in C(\R)$ and $\alpha,\beta,\gamma,\delta \in \Omega$, the tree
\[
\begin{tikzpicture}[thick]
\coordinate [label=below: $f_1$] (A) at (0,0);
\coordinate [label=right: $f_2$] (B) at (0,0.75);
\coordinate [label=right: $f_3$] (C) at (1,0.5);
\coordinate [label=left: $f_4$] (D) at (-0.75,1.75);
\coordinate [label=right: $f_5$] (E) at (0.75,1.75);
\draw (C) -- (A) -- (B);
\draw (E) -- (B) -- (D);
\foreach \n in {A,B,C,D,E} \node at (\n)[circle,fill,inner sep=1.5pt]{};
\coordinate [label=left: $\alpha$] (F) at (0,0.375);
\coordinate [label=below: $\beta$] (G) at (0.5,0.3);
\coordinate [label=left: $\gamma$] (H) at (-0.27,1.15);
\coordinate [label=right: $\delta$] (I) at (0.27,1.15);
\end{tikzpicture}
\]
corresponds to the integral
\[
f_1P_\alpha\left(f_2P_\gamma(f_4)P_\delta(f_5)\right)P_\beta(f_3) = f_1\left(\int_\alpha f_2\left(\int_\gamma f_4\right)\left(\int_\delta f_5\right)\right)\left(\int_\beta f_3\right).
\]

We define two operations on $\mathfrak{E}(\Omega)$ that correspond to operations on integrals.  The {\bf grafting product} of two decorated trees $T$ and $U$, denoted $T \veebar U$, is obtained by merging the roots of $T$ and $U$ into a common root shared by the branches of both trees.  The label of the root of $T \veebar U$ is the product of the labels of the roots of $T$ and $U$.  This is equivalent to multiplying two integrals together.  For example, we have:

\[
\begin{tikzpicture}[thick]
\coordinate [label=right: $a$] (A) at (0,0);
\coordinate [label=right: $b$] (B) at (0,1);
\coordinate [label=left: $\alpha$] (C) at (0,0.5);
\draw (A) -- (B);
\coordinate [label=left: $\veebar$] (D) at (1.25,0.5);
\coordinate [label=right: $c$] (E) at (2,0);
\coordinate [label=right: $d$] (F) at (2,1);
\coordinate [label=left: $\beta$] (G) at (2,0.5);
\draw (E) -- (F);
\coordinate [label=left: $\text{=}$] (H) at (3.25,0.5);
\coordinate [label=right: $ac$] (I) at (4.75,0);
\coordinate [label=left: $b$] (J) at (4,1);
\coordinate [label=right: $d$] (K) at (5.5,1);
\coordinate [label=left: $\alpha$] (L) at (4.45,0.5);
\coordinate [label=right: $\beta$] (M) at (5.1,0.5);
\draw (J) -- (I) -- (K);
\foreach \n in {A,B,E,F,I,J,K} \node at (\n)[circle,fill,inner sep=1.5pt]{};
\end{tikzpicture}
\]
This example is equivalent to the integral equation
\[
\left(a\int_\alpha b\right)\left(c\int_\beta d\right) = ac\left(\int_\alpha b\right)\left(\int_\beta d\right),
\]
or, in operator notation,
\[
\Big(aP_{\alpha}(b)\Big)\Big(cP_{\beta}(d)\Big) = ac\Big(P_{\alpha}(b)P_{\beta}(d)\Big).
\]

The other operation is the {\bf extension operator} $\Lambda_{\omega}$ for $\omega \in \Omega$. This sends a decorated rooted tree $T$ to a new tree, $\Lambda_{\omega}(T)$ by adding a new root connecting to the root of $T$.  The new root is decorated by $1$ and the edge connecting the new root to the old root is decorated by $\omega$.  This is analogous to taking the $\omega$ integral of the input integral polynomial.  For example:

\[
\begin{tikzpicture}[thick]
\coordinate [label=right: $a$] (A) at (0,0);
\coordinate [label=left: $b$] (B) at (-0.75,1);
\coordinate [label=right: $c$] (C) at (0.75,1);
\coordinate [label=left: $\alpha$] (D) at (-0.35,0.5);
\coordinate [label=right: $\beta$] (E) at (0.35,0.5);
\draw (B) -- (A) -- (C);
\coordinate [label=left: $\Lambda_\omega$\Huge(] (F) at (-1,0.5);
\coordinate [label=right: \Huge)] (G) at (1,0.5);
\coordinate [label=left: $\text{=}$] (H) at (2.25,0.5);
\coordinate [label=right: $a$] (I) at (3.5,0.5);
\coordinate [label=left: $b$] (J) at (2.75,1.5);
\coordinate [label=right: $c$] (K) at (4.25,1.5);
\coordinate [label=right: $1$] (L) at (3.5,-0.5);
\coordinate [label=left: $\alpha$] (M) at (3.15,1);
\coordinate [label=right: $\beta$] (N) at (3.85,1);
\coordinate [label=left: $\omega$] (O) at (3.5,0);
\draw (J) -- (I) -- (K);
\draw (L) -- (I);
\foreach \n in {A,B,C,I,J,K,L} \node at (\n)[circle,fill,inner sep=1.5pt]{};
\end{tikzpicture}
\]
This example is equivalent to applying $\int_\omega$ to the integral $a\left(\int_\alpha b\right)\left(\int_\beta c\right)$, giving us
\[
\int_\omega\left(a\left(\int_\alpha b\right)\left(\int_\beta c\right)\right),
\]
or, in operator notation, applying $P_\omega$ to $aP_\alpha(b)P_\beta(c)$, giving
\[
P_{\omega}\left(aP_{\alpha}(b)P_{\beta}(c)\right).
\]

We say two trees $T_1$ and $T_2$ are {\bf equivalent} in $\mathfrak{E}(\Omega)$, and write $T_1=T_2$, if the integral equations corresponding to $T_1$ and $T_2$ are equivalent equations, that is, they have the same solution set.  Theorem~\ref{theorem:MTRBA} then immediately gives us

\begin{theorem}\label{theorem:RBtrees}
Let $\Omega$ be an indexing set, let $a,f,g \in C(\R)$, and let $\alpha,\beta \in \Omega$.  The following identity is true in $\mathfrak{E}(\Omega)$:

\begin{equation}\label{eq:RBtrees}
\begin{aligned}
\begin{tikzpicture}[thick]
\coordinate [label=right: $a$] (A) at (0,0);
\coordinate [label=left: $f$] (B) at (-0.75,1);
\coordinate [label=right: $g$] (C) at (0.75,1);
\coordinate [label=left: $\alpha$] (D) at (-0.35,0.5);
\coordinate [label=right: $\beta$] (E) at (0.35,0.5);
\draw (B) -- (A) -- (C);
\coordinate [label=left: $\text{=}$] (F) at (2.25,0.5);
\coordinate [label=right: $\tau_\beta a$] (A1) at (3,-0.5);
\coordinate [label=right: $\tau_\beta^{-1} f$] (A2) at (3,0.5);
\coordinate [label=right: $g$] (A3) at (3,1.5);
\coordinate [label=left: $\alpha$] (A4) at (3,0);
\coordinate [label=left: $\beta$] (A5) at (3,1);
\draw (A1) -- (A2) -- (A3);
\coordinate [label=left: $+$] (G) at (5,0.5);
\coordinate [label=right: $\tau_\alpha a$] (B1) at (5.75,-0.5);
\coordinate [label=right: $\tau_\alpha^{-1} g$] (B2) at (5.75,0.5);
\coordinate [label=right: $f$] (B3) at (5.75,1.5);
\coordinate [label=left: $\beta$] (B4) at (5.75,0);
\coordinate [label=left: $\alpha$] (B5) at (5.75,1);
\draw (B1) -- (B2) -- (B3);
\foreach \n in {A,B,C,A1,A2,A3,B1,B2,B3} \node at (\n)[circle,fill,inner sep=1.5pt]{};
\end{tikzpicture}
\end{aligned}
\end{equation}
\end{theorem}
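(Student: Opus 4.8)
The plan is to reduce the tree identity \eqref{eq:RBtrees} to the integral identity already recorded in Theorem~\ref{theorem:MTRBA} (equivalently Eq.~\eqref{eq:RBint}), using the correspondence between forests in $\mathfrak{E}(\Omega)$ and integral polynomials. First I would read off the integral polynomial attached to each of the three trees. The left-hand tree has root labeled $a$ with two children, one joined by an edge labeled $\alpha$ to a vertex labeled $f$ and the other joined by an edge labeled $\beta$ to a vertex labeled $g$; by the branching-point rule this corresponds to $a\,P_\alpha(f)P_\beta(g) = a\left(\int_\alpha f\right)\left(\int_\beta g\right)$. The first tree on the right is the chain $\tau_\beta a \to \tau_\beta^{-1}f \to g$ with edges labeled $\alpha$ then $\beta$, corresponding to the iterated integral $\tau_\beta a\,P_\alpha\!\left(\tau_\beta^{-1}f\,P_\beta(g)\right)$, and the second is the chain $\tau_\alpha a \to \tau_\alpha^{-1}g \to f$ with edges labeled $\beta$ then $\alpha$, corresponding to $\tau_\alpha a\,P_\beta\!\left(\tau_\alpha^{-1}g\,P_\alpha(f)\right)$.

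By the definition of equivalence in $\mathfrak{E}(\Omega)$, proving \eqref{eq:RBtrees} amounts to proving that the two sides are equal as elements of $C(\R)$ for all admissible $f,g$, since then the corresponding integral polynomials coincide and the associated equations have the same solution set. Concretely, I would verify
\[
a\left(\int_\alpha f\right)\left(\int_\beta g\right) = \tau_\beta a \int_\alpha\!\left(\tau_\beta^{-1}f\left(\int_\beta g\right)\right) + \tau_\alpha a \int_\beta\!\left(\tau_\alpha^{-1}\left(\int_\alpha f\right)g\right),
\]
which is obtained simply by multiplying both sides of Eq.~\eqref{eq:RBint} (equivalently Eq.~\eqref{MTRBA}) by $a \in C(\R)$ and using commutativity of multiplication in $C(\R)$ to write $P_\alpha(f)\,g$ as $g\,P_\alpha(f)$ in the last term. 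The hypotheses of Theorem~\ref{theorem:MTRBA} guarantee that each $\tau_\omega$ is invertible in $C(\R)$, so $\tau_\beta a$, $\tau_\beta^{-1}f$, $\tau_\alpha a$, and $\tau_\alpha^{-1}g$ are all legitimate vertex labels, and that the separable Volterra operators $P_\Omega$ form a matching twisted Rota-Baxter operator with twist $\tau_\Omega$, which is exactly the content of Eq.~\eqref{eq:RBint}.

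There is essentially no analytic obstacle: the whole weight of the statement rests on Theorem~\ref{theorem:MTRBA}, and the remaining work is bookkeeping — checking that the tree-to-integral dictionary has been applied to each of the three trees with the correct edge labels and order of iteration, and that multiplying through by $a$ and commuting factors does not disturb the solution set. The one point deserving a sentence of care is why adjoining the common root label $a$ on both sides is legitimate: on the tree side this is just the vertex-labeling convention for a common root, and on the integral side it is multiplication of equal elements of $C(\R)$ by $a$, which preserves equality and hence the solution set. With these checks in place, the identity \eqref{eq:RBtrees} follows immediately from Theorem~\ref{theorem:MTRBA}.
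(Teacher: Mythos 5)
Your proposal is correct and matches the paper's argument: the paper derives Eq.~\eqref{eq:RBtrees} as an immediate consequence of Theorem~\ref{theorem:MTRBA}, exactly by reading the three trees as $aP_\alpha(f)P_\beta(g)$, $\tau_\beta a P_\alpha(\tau_\beta^{-1}fP_\beta(g))$, and $\tau_\alpha a P_\beta(\tau_\alpha^{-1}gP_\alpha(f))$ and multiplying Eq.~\eqref{MTRBA} through by $a$. Your write-up just makes explicit the bookkeeping the paper leaves implicit.
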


Note that Eq.~\eqref{eq:RBtrees} is simply Eq.~\eqref{MTRBA} written in the form of trees.  Observe that on the left-hand side of this equation the tree contains a branching point, which corresponds to a product of integrals; while the two trees on the right-hand side have no branching but instead are chains of length two, corresponding to two iterated (i.e. operator linear) integrals.  In addition, observe that each tree in the forest on the right-hand side of Eq~\eqref{eq:RBtrees} contains the same number of edges as the tree on the left-hand side.

Eq.~\eqref{eq:RBtrees} can be applied to any two branches emanating from any branching point in a tree.  For example, applying the result to the branching point $b$ and chains ending in $g$ and $e$ in the tree below, we obtain the equivalence of trees

\[
\begin{tikzpicture}[thick]
\coordinate[label=below: $a$] (A1) at (0,0);
\coordinate[label=right: $c$] (A2) at (1,0.5);
\coordinate (A3) at (0,1);
\coordinate[label=right: $f$] (A4) at (1,1.5);
\coordinate[label=right: $e$] (A5) at (0,2);
\coordinate[label=left: $d$] (A6) at (-1,1.5);
\coordinate[label=right: $g$] (A7) at (-1,2.5);
\coordinate[label=right: $b$] (A') at (0,0.9);
\draw (A2) -- (A1) -- (A3) -- (A6) -- (A7);
\draw (A4) -- (A3) -- (A5);
\coordinate[label=below: $\beta$] (A8) at (0.6,0.4);
\coordinate[label=left: $\alpha$] (A9) at (0.1,0.5);
\coordinate[label=below: $\gamma$] (A10) at (-0.6,1.3);
\coordinate[label=below: $\sigma$] (A11) at (0.6,1.3);
\coordinate[label=left: $\lambda$] (A12) at (-0.9,2);
\coordinate[label=left: $\delta$] (A13) at (0.1,1.5);
\foreach \n in {A1,A2,A3,A4,A5,A6,A7} \node at (\n)[circle,fill,inner sep=1.5pt]{};
\coordinate[label=left: \text{=}] (A) at (2.5,1);
\coordinate[label=below: $a$] (B1) at (7.5,0);
\coordinate[label=right: $c$] (B2) at (8.5,0.5);
\coordinate[label=left: $\tau_\gamma b$] (B3) at (7.5,1);
\coordinate[label=right: $f$] (B4) at (8.5,1.5);
\coordinate[label=right: $\tau_\gamma^{-1}e$] (B5) at (7.5,2);
\coordinate[label=right: $d$] (B6) at (7.5,3);
\coordinate[label=right: $g$] (B7) at (7.5,4);
\draw (B2) -- (B1) -- (B3) -- (B6) -- (B7);
\draw (B4) -- (B3) -- (B5);
\coordinate[label=below: $\beta$] (B8) at (8.1,0.4);
\coordinate[label=left: $\alpha$] (B9) at (7.6,0.5);
\coordinate[label=left: $\gamma$] (B10) at (7.6,2.5);
\coordinate[label=below: $\sigma$] (B11) at (8.1,1.3);
\coordinate[label=left: $\lambda$] (B12) at (7.6,3.5);
\coordinate[label=left: $\delta$] (B13) at (7.6,1.5);
\foreach \n in {B1,B2,B3,B4,B5,B6,B7} \node at (\n)[circle,fill,inner sep=1.5pt]{};
\coordinate[label=left: $+$] (B) at (6.5,1);
\coordinate[label=below: $a$] (C1) at (4,0);
\coordinate[label=right: $c$] (C2) at (5,0.5);
\coordinate[label=left: $\tau_\delta b$] (C3) at (4,1);
\coordinate[label=right: $f$] (C4) at (5,1.5);
\coordinate (C5) at (4,2);
\coordinate[label=right: $\tau_\delta^{-1}d$] (C') at (4,1.9);
\coordinate[label=right: $d$] (C6) at (5,2.5);
\coordinate[label=left: $g$] (C7) at (3,2.5);
\draw (C2) -- (C1) -- (C3);
\draw (C4) -- (C3) -- (C5);
\draw (C6) -- (C5) -- (C7);
\coordinate[label=below: $\beta$] (C8) at (4.6,0.4);
\coordinate[label=left: $\alpha$] (C9) at (4.1,0.5);
\coordinate[label=left: $\delta$] (C10) at (4.7,2.4);
\coordinate[label=below: $\sigma$] (C11) at (4.6,1.3);
\coordinate[label=left: $\lambda$] (C12) at (3.9,2.4);
\coordinate[label=left: $\gamma$] (C13) at (4.1,1.5);
\foreach \n in {C1,C2,C3,C4,C5,C6,C7} \node at (\n)[circle,fill,inner sep=1.5pt]{};
\end{tikzpicture}
\]
This translates to the integral polynomial identity
\begin{align*}
aP_\alpha\left(bP_\gamma\left(dP_\lambda(g)\right)P_\delta(e)P_\kappa(f)\right)P_\beta(c) &= aP_\alpha\left(\tau_\delta bP_\gamma\left(\tau_\delta^{-1} dP_\lambda(g)P_\delta(e)\right)P_\kappa(f)\right)P_\beta(c)\\
&\hskip1cm+aP_\alpha\left(\tau_\gamma bP_\delta\left(\tau_\gamma^{-1}eP_\gamma\left(dP_\lambda(g)\right)\right)P_\kappa(f)\right)P_\beta(c).
\end{align*}
Notice how the twisted Rota-Baxter identity in Eq.~\eqref{MTRBA} is applied inside an iterated integral, so when the identity is applied the rest of the terms stay the same.

\section{Reduction Algorithm}

In the following section, we construct an algorithm that reduces a forest in $\mathfrak{E}(\Omega)$ to an equivalent one in that does not contain any branching points.  The two forests are equivalent in the sense that the integral equations associated with them are equivalent equations, that is, they have the same solution set.  As a result, this algorithm transforms any separable Volterra integral equation into an equivalent one in operator-linear form.

Before stating the algorithm, we introduce some terminology that will be used.  A {\bf terminal branch} of a tree $T$ is a chain starting at a branching point of $T$ and ending at a leaf, with no branching points in between.  If there are no branching points in the tree, we say that the entire tree $T$ is a terminal branch.  Given a tree $T$, let $E(T)$ be the total number of edges in $T$, $N(T)$ be the total number of terminal branches of $T$, and $D(T)$ be the sum of the lengths (i.e. the total number of edges) of the terminal branches in $T$.  Note that for any tree $T$, we have $E(T) \ge D(T)$ by definition, and $D(T) \ge N(T)$, since every terminal branch has at least one edge.  If $F$ is a forest, we can similarly define $E(F)$, $N(F)$, and $D(F)$.  

\begin{algorithm}
\caption{Reduction Algorithm}\label{redalg}
\begin{align*}
&\text{{\bf Input:} A vertex-edge decorated rooted forest } F_0 \in \mathfrak{E}(\Omega).\\
&\text{{\bf Output:} A forest } F' \in \mathfrak{E}(\Omega) \text{ that is equivalent to } F_0 \text{ and contains no branching points.}
\end{align*}

\begin{enumerate}
\item Pick any tree $T$ in the forest, and pick any branching point $x$ in $T$ of maximum height.  Observe that we can write $T(x)$ as 

\[
\begin{tikzpicture}[thick]
\coordinate[label=below: $x$] (A) at (0,0);
\coordinate[label=above: $T(f_{1,1})$] (B) at (-2.5,1.5);
\coordinate[label=above: $T(f_{2,1})$] (C) at (-0.5,1.5);
\coordinate[label=above: $T(f_{m,1})$] (D) at (3,1.5);
\draw (B) -- (A) -- (C);
\draw (A) -- (D);
\foreach \n in {A,B,C,D} \node at (\n)[circle,fill,inner sep=1.5pt]{};
\coordinate[label=left: $T(x) \text{ =}$] (E) at (-3,0.75);
\coordinate[label=left: $\alpha_{1,1}$] (F) at (-1.4,0.75);
\coordinate[label=left: $\alpha_{2,1}$] (G) at (-0.1,0.75);
\coordinate[label=right: $\alpha_{m,1}$] (H) at (1.7,0.75);
\draw[loosely dotted] (G) -- (1.4,0.75);
\end{tikzpicture}
\]
where
\[
\begin{tikzpicture}[thick]
\coordinate[label=right: $f_{i,1}$] (A) at (0,0);
\coordinate[label=right: $f_{i,2}$] (B) at (0,1);
\coordinate[label=right: $f_{i,3}$] (C) at (0,2);
\coordinate[label=right: $f_{i,n_i-1}$] (D) at (0,3);
\coordinate[label=right: $f_{i,n_i}$] (E) at (0,4);
\draw (A) -- (C);
\draw (D) -- (E);
\draw[loosely dotted] (C) -- (D);
\foreach \n in {A,B,C,D,E} \node at (\n)[circle,fill,inner sep=1.5pt]{};
\coordinate[label=left: $\alpha_{i,2}$] (F) at (0,0.5);
\coordinate[label=left: $\alpha_{i,3}$] (G) at (0,1.5);
\coordinate[label=left: $\alpha_{i,n_i}$] (H) at (0,3.5);
\coordinate[label=left: $T(f_{i,1}) \text{ =}$] (I) at (-1,2);
\end{tikzpicture}
\]

\item Using Theorem~\ref{theorem:RBtrees}, perform the twisted Rota-Baxter identity on the terminal branches above $x$ ending in $f_{1,n_1}$ and $f_{2,n_2}$, producing two new trees from $T(x)$:

\[
\begin{tikzpicture}[thick]
\coordinate[label=below: $\tau_{\alpha_{2,1}x}$] (A1) at (0,0);
\coordinate[label=left: $\tau_{\alpha_{2,1}}^{-1}f_{1,1}$] (A2) at (-2,1.5);
\coordinate[label=above: $T(f_{3,1})$] (A3) at (0,1.5);
\coordinate[label=above: $T(f_{m,1})$] (A4) at (2,1.5);
\coordinate[label=above: $T(f_{1,2})$] (A5) at (-3,2.5);
\coordinate[label=above: $T(f_{2,1})$] (A6) at (-1,2.5);
\draw (A5) -- (A2) -- (A1) -- (A3);
\draw (A1) -- (A4);
\draw (A6) -- (A2);
\foreach \n in {A1,A2,A3,A4,A5,A6} \node at (\n)[circle,fill,inner sep=1.5pt]{};
\coordinate[label=left: $\alpha_{1,1}$] (A7) at (-1,0.75);
\coordinate[label=left: $\alpha_{3,1}$] (A8) at (0.15,0.75);
\coordinate[label=right: $\alpha_{m,1}$] (A9) at (1.05,0.75);
\coordinate[label=left: $\alpha_{1,2}$] (A10) at (-2.5,2);
\coordinate[label=right: $\alpha_{2,1}$] (A11) at (-1.5,2);
\draw[loosely dotted] (A8) -- (0.95,0.75);
\coordinate[label=left: $T(x) \text{ =}$] (A) at (-4,1);
\coordinate[label=left: $+$] (B) at (3.5,1);
\coordinate[label=below: $\tau_{\alpha_{1,1}x}$] (B1) at (7.5,0);
\coordinate[label=left: $\tau_{\alpha_{1,1}}^{-1}f_{2,1}$] (B2) at (5.5,1.5);
\coordinate[label=above: $T(f_{3,1})$] (B3) at (7.5,1.5);
\coordinate[label=above: $T(f_{m,1})$] (B4) at (9.5,1.5);
\coordinate[label=above: $T(f_{1,1})$] (B5) at (4.5,2.5);
\coordinate[label=above: $T(f_{2,2})$] (B6) at (6.5,2.5);
\draw (B5) -- (B2) -- (B1) -- (B3);
\draw (B1) -- (B4);
\draw (B6) -- (B2);
\foreach \n in {B1,B2,B3,B4,B5,B6} \node at (\n)[circle,fill,inner sep=1.5pt]{};
\coordinate[label=left: $\alpha_{2,1}$] (B7) at (6.5,0.75);
\coordinate[label=left: $\alpha_{3,1}$] (B8) at (7.65,0.75);
\coordinate[label=right: $\alpha_{m,1}$] (B9) at (8.55,0.75);
\coordinate[label=left: $\alpha_{1,1}$] (B10) at (5,2);
\coordinate[label=right: $\alpha_{2,2}$] (B11) at (6,2);
\draw[loosely dotted] (B8) -- (8.45,0.75);
\coordinate[label=right: $\text{= } T_1' + T_2'$] (C) at (10,1);
\end{tikzpicture}
\]

\item Form two new trees $T_1$ and $T_2$ from $T$.  Each new tree is $B(x)$ adjoined by $T_1'$ or $T_2'$ in place of the original $T(x)$.  Remove $T$ from the forest and add the two new trees $T_1$ and $T_2$ to the forest in its place.

\item If there are no branching points in any tree in the forest, the algorithm terminates.  If not, go back to step (1) and repeat.
\end{enumerate}
\end{algorithm}

\begin{theorem}\label{theorem:algorithm}
Algorithm~\ref{redalg} terminates after finitely many steps.  The output forest $F'$ contains no branching points and is equivalent to the input forest $F_0$.
\end{theorem}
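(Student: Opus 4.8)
The plan is to prove the statement in two parts: correctness of the output (the easy part, a consequence of Theorem~\ref{theorem:RBtrees}) and termination (the real content, which requires exhibiting a monovariant).

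For correctness I would track what a single execution of steps (1)--(3) does: it removes one tree $T$ from the current forest and inserts the two trees $T_1,T_2$, where $T_i$ is $B(x)$ with $T_i'$ grafted in place of $T(x)$. The tree $T(x)$ has root label $x$ and corresponds to an integral polynomial of the form $x\cdot P_{\alpha_{1,1}}(g_1)\,P_{\alpha_{2,1}}(g_2)\cdot\prod_{i\ge 3}P_{\alpha_{i,1}}(g_i)$; applying Eq.~\eqref{MTRBA} to the factors $P_{\alpha_{1,1}}(g_1)$ and $P_{\alpha_{2,1}}(g_2)$ and distributing the remaining product over the sum gives exactly $T(x)=T_1'+T_2'$. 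Since $T$ is built from $T(x)$ by applying extension operators and grafting with fixed trees, operations that distribute over sums, this yields $T=T_1+T_2$ as integral polynomials, so the forest's associated integral equation --- and hence its solution set --- is unchanged by the step. Thus $F'$ is equivalent to $F_0$, and since the algorithm exits only at step (4), i.e. only when no tree in the forest contains a branching point, $F'$ is operator-linear.

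For termination I would produce a strictly decreasing $\mathbb{N}$-valued potential. First observe, as remarked after Eq.~\eqref{eq:RBtrees}, that the two trees on its right side have the same number of edges as the tree on its left; since the identity is used inside $T(x)$ and $E(T)=E(B(x))+E(T(x))$, we get $E(T_1)=E(T_2)=E(T)$, so every tree ever produced has at most $E_0:=\max\{E(T):T\text{ in }F_0\}$ edges, and for such trees the pair $(N(T),D(T))$ ranges over the finite set $\{0,\dots,E_0\}^2$. The key local claim is that one execution of the loop, resolving the terminal branches of lengths $n_1,n_2$ at the chosen maximum-height branching point $x$, produces $T_1,T_2$ with $(N(T_i),D(T_i))<_{\mathrm{lex}}(N(T),D(T))$. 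The case analysis: because $x$ has maximum height, all branches above $x$ are terminal branches; if $n_i\ge 2$ then in $T_i'$ the vertex above $x$ on that branch becomes a new branching point, so no leaf is gained or lost, $N(T_i)=N(T)$, while the edge $\alpha_{i,1}$ now joins two branching points (or lies below one) instead of lying on a terminal branch, so $D(T_i)=D(T)-1$; if $n_i=1$ then $T_i'$ merges the other resolved branch onto the $i$-th and destroys the leaf $f_{i,1}$, so $N(T_i)=N(T)-1$. Either way the lexicographic value drops, and the part of $T$ outside $T(x)$ is unchanged except for the single relabeled vertex $x$, so it does not affect the comparison. Fixing an order-embedding $\mathrm{enc}$ of $(\{0,\dots,E_0\}^2,<_{\mathrm{lex}})$ into $\mathbb{N}$ and setting $\Psi(F):=\sum_{T\text{ in }F}3^{\mathrm{enc}(N(T),D(T))}$, replacing $T$ (with $\mathrm{enc}$-value $p$) by $T_1,T_2$ (both with $\mathrm{enc}$-value $\le p-1$) decreases $\Psi$ by at least $3^{p}-2\cdot 3^{p-1}=3^{p-1}>0$; as $\Psi$ is a positive integer, the loop runs only finitely many times.

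The hard part is identifying the monovariant and getting its supporting case analysis exactly right. Every obvious candidate fails: $E$ is invariant, and $N$, $D$, the number of branching points, and the maximum height of a branching point can each \emph{increase} in a single step, since resolving the topmost branching point can create a new branching point one edge higher and the forest grows by one tree at every pass --- so none of these is monotone for the forest. The resolution is that $E$ and essentially $D$ are \emph{preserved} while the \emph{pair} $(N,D)$ strictly decreases lexicographically for each of the two replacement trees, which then has to be repackaged into a genuinely decreasing integer potential on the whole forest. The delicate points in the case analysis are the degenerate sub-case $m=2$, where $x$ ceases to be a branching point, and the need to use the convention that a terminal branch runs from a branching point to a leaf, so that an edge joining two branching points does not count toward $D$.
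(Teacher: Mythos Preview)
Your proposal is correct and follows essentially the same route as the paper: equivalence via Theorem~\ref{theorem:RBtrees}, edge count preserved, and the same $n_i=1$ versus $n_i>1$ case split showing that $(N(T_i),D(T_i))<_{\mathrm{lex}}(N(T),D(T))$ for each replacement tree. Your explicit potential $\Psi(F)=\sum_T 3^{\mathrm{enc}(N(T),D(T))}$ is a clean addition that makes termination over the growing forest fully rigorous, where the paper argues more informally by tracking a single tree's lineage.
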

\begin{proof}
By Theorem~\ref{theorem:RBtrees}, the forest produced after each iteration of the algorithm is equivalent to the previous forest, so assuming the algorithm terminates, the input and output forests will be equivalent.  Also, it is clear that if the algorithm terminates, the output forest will have no branching points.  Thus it remains to show that the algorithm terminates after finitely many steps.  Note that the algorithm terminates when $N(T)=1$ for all trees $T$ in the forest.

At each step in the algorithm, a tree $T$ is replaced by two new trees $T_1$ and $T_2$.  Observe by Theorem~\ref{theorem:RBtrees}, $E(T_1) = E(T_2) = E(T)$, as the twisted Rota-Baxter identity in Eq.~\eqref{eq:RBtrees} does not change the total number of edges in each tree of the forest.  Thus at every step of the algorithm, every tree $T$ in the forest satisfies $E(T) \le E(F_0)$, where $F_0$ is the input forest.  Since $N(T) \le D(T) \le E(T)$ for a tree $T$, we also have $D(T) \le E(F_0)$ and $N(T) \le E(F_0)$ for every tree $T$ in every step of the algorithm.

We claim that when tree $T$ is replaced by trees $T_1$ and $T_2$ in the algorithm, each $T_i$ ($i=1,2$) satisfies either
\begin{itemize}
\item[(a)] $N(T_i) < N(T)$, or
\item[(b)] $N(T_i) = N(T) \text{ and } D(T_i) < D(T)$.
\end{itemize}
Since $D(T_i)>0$, the second case can only occur finitely many times until the first case occurs.  Again, since $N(T_i) > 0$, this can also only occur finitely many times until eventually $N(T_i) = 1$.  Note that in case (a), it is possible to have $D(T_i) > D(T)$; however, since $D(T) < E(F_0)$ for every tree in the every iteration of the algorithm, there is a fixed maximum value for $D(T_i)$.

Consider an iteration of the algorithm in which we are performing the identity in Eq.~\eqref{eq:RBtrees} at branching point $x$ in tree $T$ to the branches ending in $f_{1,n_1}$ and $f_{2,n_2}$.  Let $n_i$ be the length of terminal branch $i$ with root $x$.  Observe that if $n_1>1$, then in $T_1$, the first edge of the terminal branch of $T$ ending in $f_{1,n_1}$ now occurs below the new branching point $\tau_{\alpha_{2,1}}^{-1}f_{1,1}$ (which is indeed a branching point in $T_1$ since $n_1 > 1$).  Thus the terminal branch of $T_1$ ending in $f_{1,n_1}$ has one fewer edge than it did in $T$.  If $n_1=1$, then $\tau_{\alpha_{2,1}}^{-1}f_{1,1}$ is not a branching point of $T_1$, as there is no $f_{1,2}$ to form a second branch.  In either case, the terminal branch of $T_1$ ending in $f_{2,n_2}$ has the same number of edges as it did in $T$.  We can make similar observations regarding $T_2$ and $n_2$.  Also, note that none of the other terminal branches emanating from $x$ in $T$ (if they exist) are affected by Eq.~\eqref{eq:RBtrees}, and are thus still terminal branches in $T_1$ and $T_2$ emanating from $\tau_{\alpha_{2,1}}x$ and $\tau_{\alpha_{1,1}}x$, respectively, of the same length as they are in $T$.

Summarizing, we see that for $i=1,2$, we have 
\[
\begin{cases}\text{ if } n_i=1, \text{ then } N(T_i) = N(T)-1, \\ \text{ if } n_i>1, \text{ then } N(T_i)=N(T) \text{ and } D(T_i)=D(T)-1.  \end{cases}
\]
As a result, for every iteration of the algorithm, either (a) or (b) above will occur for each output tree $T_1$ and $T_2$, meaning that the algorithm will terminate.
\end{proof}


\section{Example of Operator Linear Reduction}

In this section we apply Algorithm~\ref{redalg} to several examples, and analyze the corresponding integral equation reductions.  

\begin{example}\label{iterated2}
In this example we will work through each iteration of the algorithm, stating both the resulting forest and its equivalent integral polynomial.

\begin{multicols}{2}
Input:
\[
\begin{tikzpicture}[thick]
\coordinate[label=below: $a$] (A) at (0,0);
\coordinate[label=right: $g_1$] (B) at (0,1);
\coordinate[label=right: $g_2$] (C) at (0,2);
\coordinate[label=above: $f$] (D) at (-1,0.5);
\draw (D) -- (A) -- (B) -- (C);
\foreach \n in {A,B,C,D} \node at (\n)[circle,fill,inner sep=1.5pt]{};
\coordinate[label=left: $\beta_1$] (E) at (0.15,0.5);
\coordinate[label=left: $\beta_2$] (F) at (0.15,1.5);
\coordinate[label=below: $\alpha$] (G) at (-0.5,0.3);
\end{tikzpicture}
\]

Integral Representation:
\[
a\left(\int_\alpha f\right)\left(\int_{\beta_1}g_1\left(\int_{\beta_2}g_2\right)\right)
\]
\end{multicols}

\begin{multicols}{2}
Iteration 1:
\[
\begin{tikzpicture}[thick]
\coordinate[label=below: $\tau_{\beta_1}a$] (A1) at (0,0);
\coordinate[label=right: $\tau_{\beta_1}^{-1}f$] (A2) at (0,1);
\coordinate[label=right: $g_1$] (A3) at (0,2);
\coordinate[label=right: $g_2$] (A4) at (0,3);
\draw (A1) -- (A4);
\foreach \n in {A1,A2,A3,A4} \node at (\n)[circle,fill,inner sep=1.5pt]{};
\coordinate[label=left: $\alpha$] (A5) at (0.1,0.5);
\coordinate[label=left: $\beta_1$] (A6) at (0.15,1.5);
\coordinate[label=left: $\beta_2$] (A7) at (0.15,2.5);
\coordinate[label=left: $+$] (A) at (2,1.5);
\coordinate[label=below: $\tau_\alpha a$] (B1) at (4,0.5);
\coordinate[label=right: $\tau_{\alpha}^{-1}g_1$] (B2) at (4,1.5);
\coordinate[label=above: $f$] (B3) at (3,2.5);
\coordinate[label=above: $g_2$] (B4) at (5,2.5);
\draw (B3) -- (B2) -- (B1);
\draw (B4) -- (B2);
\foreach \n in {B1,B2,B3,B4} \node at (\n)[circle,fill,inner sep=1.5pt]{};
\coordinate[label=left: $\beta_1$] (B5) at (4.15,1);
\coordinate[label=above: $\alpha$] (B5) at (3.5,2);
\coordinate[label=above: $\beta_2$] (B6) at (4.5,2);
\end{tikzpicture}
\]

Integral Representation:
\begin{multline*}
\tau_{\beta_1}a\int_\alpha\tau_{\beta_1}^{-1}f\left(\int_{\beta_1}g_1\left(\int_{\beta_2}g_2\right)\right)\\
+\tau_\alpha a\left(\int_{\beta_1}\tau_\alpha^{-1}g_1\left(\int_\alpha f\right)\right)\left(\int_{\beta_2}g_2\right)
\end{multline*}
\end{multicols}

\begin{multicols}{2}
Iteration 2:
\[
\begin{tikzpicture}[thick]
\coordinate[label=below: $\tau_{\beta_1}a$] (A1) at (0,0);
\coordinate[label=right: $\tau_{\beta_1}^{-1}f$] (A2) at (0,1);
\coordinate[label=right: $g_1$] (A3) at (0,2);
\coordinate[label=right: $g_2$] (A4) at (0,3);
\draw (A1) -- (A4);
\foreach \n in {A1,A2,A3,A4} \node at (\n)[circle,fill,inner sep=1.5pt]{};
\coordinate[label=left: $\alpha$] (A5) at (0.1,0.5);
\coordinate[label=left: $\beta_1$] (A6) at (0.15,1.5);
\coordinate[label=left: $\beta_2$] (A7) at (0.15,2.5);
\coordinate[label=left: $+$] (A) at (2,1.5);
\coordinate[label=below: $\tau_{\alpha}a$] (B1) at (3,0);
\coordinate[label=right: $\tau_{\alpha}^{-1}g_1\tau_{\beta_2}$] (B2) at (3,1);
\coordinate[label=right: $\tau_{\beta_2}^{-1}f$] (B3) at (3,2);
\coordinate[label=right: $g_2$] (B4) at (3,3);
\draw (B1) -- (B4);
\foreach \n in {B1,B2,B3,B4} \node at (\n)[circle,fill,inner sep=1.5pt]{};
\coordinate[label=left: $\beta_1$] (B5) at (3.15,0.5);
\coordinate[label=left: $\alpha$] (B6) at (3.1,1.5);
\coordinate[label=left: $\beta_2$] (B7) at (3.15,2.5);
\coordinate[label=left: $+$] (B) at (5,1.5);
\coordinate[label=below: $\tau_{\alpha}a$] (C1) at (6,0);
\coordinate[label=right: $g_1$] (C2) at (6,1);
\coordinate[label=right: $\tau_{\alpha}^{-1}g_2$] (C3) at (6,2);
\coordinate[label=right: $f$] (C4) at (6,3);
\draw (C1) -- (C4);
\foreach \n in {C1,C2,C3,C4} \node at (\n)[circle,fill,inner sep=1.5pt]{};
\coordinate[label=left: $\beta_1$] (C5) at (6.15,0.5);
\coordinate[label=left: $\beta_2$] (C6) at (6.15,1.5);
\coordinate[label=left: $\alpha$] (C7) at (6.1,2.5);
\end{tikzpicture}
\]

Integral Representation:
\begin{multline*}
\tau_{\beta_1}a\int_\alpha\tau_{\beta_1}^{-1}f\left(\int_{\beta_1}g_1\left(\int_{\beta_2}g_2\right)\right)\\
+\tau_\alpha a\int_{\beta_1}\tau_\alpha^{-1}g_1\tau_{\beta_2}\left(\int_\alpha \tau_{\beta_2}^{-1}f\left(\int_{\beta_2}g_2\right)\right)\\
+\tau_\alpha a\int_{\beta_1}g_1\left(\int_{\beta_2}\tau_\alpha^{-1}g_2\left(\int_\alpha f\right)\right)
\end{multline*}
\end{multicols}
This example allows us to see how $N(T)$ and $D(T)$ change in each iteration of Algorithm~\ref{redalg}.  Consider the first iteration, where (in the notation of the algorithm) we have
\[
\begin{tikzpicture}[thick]
\coordinate[label=left: $T \text{ =}$] (H) at (-1.5,1.5);
\coordinate[label=below: $a$] (A) at (0,0.5);
\coordinate[label=right: $g_1$] (B) at (0,1.5);
\coordinate[label=right: $g_2$] (C) at (0,2.5);
\coordinate[label=above: $f$] (D) at (-1,1);
\draw (D) -- (A) -- (B) -- (C);
\foreach \n in {A,B,C,D} \node at (\n)[circle,fill,inner sep=1.5pt]{};
\coordinate[label=left: $\beta_1$] (E) at (0.15,1);
\coordinate[label=left: $\beta_2$] (F) at (0.15,2);
\coordinate[label=below: $\alpha$] (G) at (-0.5,0.8);
\coordinate[label=left: $T_1 \text{ =}$] (A0) at (4,1.5);
\coordinate[label=below: $\tau_{\beta_1}a$] (A1) at (5,0);
\coordinate[label=right: $\tau_{\beta_1}^{-1}f$] (A2) at (5,1);
\coordinate[label=right: $g_1$] (A3) at (5,2);
\coordinate[label=right: $g_2$] (A4) at (5,3);
\draw (A1) -- (A4);
\foreach \n in {A1,A2,A3,A4} \node at (\n)[circle,fill,inner sep=1.5pt]{};
\coordinate[label=left: $\alpha$] (A5) at (5.1,0.5);
\coordinate[label=left: $\beta_1$] (A6) at (5.15,1.5);
\coordinate[label=left: $\beta_2$] (A7) at (5.15,2.5);
\coordinate[label=left: $T_2 \text{ =}$] (AP) at (9.5,1.5);
\coordinate[label=below: $\tau_\alpha a$] (B1) at (11,0.5);
\coordinate[label=right: $\tau_{\alpha}^{-1}g_1$] (B2) at (11,1.5);
\coordinate[label=above: $f$] (B3) at (10,2.5);
\coordinate[label=above: $g_2$] (B4) at (12,2.5);
\draw (B3) -- (B2) -- (B1);
\draw (B4) -- (B2);
\foreach \n in {B1,B2,B3,B4} \node at (\n)[circle,fill,inner sep=1.5pt]{};
\coordinate[label=left: $\beta_1$] (B5) at (11.15,1);
\coordinate[label=above: $\alpha$] (B5) at (10.5,2);
\coordinate[label=above: $\beta_2$] (B6) at (11.5,2);
\end{tikzpicture}
\]
Here $N(T)=2$ and $D(T) = 3$.  Since the terminal branch of $T$ ending in $f$ has length 1, that branch is no longer present in $T_1$, so we have $N(T_1) = 1 = N(T)-1$.  Since the terminal branch of $T$ ending in $g_2$ has length 2, that branch has one fewer edge in $T_2$, so we have $N(T_2) = 2 = N(T)$, while $D(T_2) = 2 = D(T)-1$.

\end{example}

\begin{example}\label{iterated3}
$\,$

Input:
\[
\begin{tikzpicture}[thick]
\coordinate[label=below: $a$] (A) at (0,0);
\coordinate[label=right: $g_1$] (B) at (0,1);
\coordinate[label=right: $g_2$] (C) at (0,2);
\coordinate[label=right: $g_3$](D) at (0,3);
\coordinate[label=above: $f$] (E) at (-1,0.5);
\draw (E) -- (A) -- (B) -- (C) -- (D);
\foreach \n in {A,B,C,D,E} \node at (\n)[circle,fill,inner sep=1.5pt]{};
\coordinate[label=left: $\beta_1$] (E) at (0.15,0.5);
\coordinate[label=left: $\beta_2$] (F) at (0.15,1.5);
\coordinate[label=left: $\beta_3$] (G) at (0.15,2.5);
\coordinate[label=below: $\alpha$] (H) at (-0.5,0.3);
\end{tikzpicture}
\]

Iteration 1:
\[
\begin{tikzpicture}[thick]
\coordinate[label=below: $\tau_{\beta_1}a$] (A1) at (0,0);
\coordinate[label=right: $\tau_{\beta_1}^{-1}f$] (A2) at (0,1);
\coordinate[label=right: $g_1$] (A3) at (0,2);
\coordinate[label=right: $g_2$](A4) at (0,3);
\coordinate[label=right: $g_3$] (A5) at (0,4);
\draw (A1) -- (A5);
\foreach \n in {A1,A2,A3,A4,A5} \node at (\n)[circle,fill,inner sep=1.5pt]{};
\coordinate[label=left: $\alpha$] (A6) at (0.1,0.5);
\coordinate[label=left: $\beta_1$] (A7) at (0.15,1.5);
\coordinate[label=left: $\beta_2$] (A8) at (0.15,2.5);
\coordinate[label=left: $\beta_3$] (A9) at (0.15,3.5);
\coordinate[label=left: $+$] (A) at (2,2);
\coordinate[label=below: $\tau_\alpha a$] (B1) at (4,0.5);
\coordinate[label=right: $\tau_\alpha^{-1}g_1$] (B2) at (4,1.5);
\coordinate[label=right: $g_2$] (B3) at (4,2.5);
\coordinate[label=right: $g_3$](B4) at (4,3.5);
\coordinate[label=above: $f$] (B5) at (3,2);
\draw (B1) -- (B4);
\draw (B2) -- (B5);
\foreach \n in {B1,B2,B3,B4,B5} \node at (\n)[circle,fill,inner sep=1.5pt]{};
\coordinate[label=left: $\beta_1$] (B6) at (4.15,1);
\coordinate[label=left: $\beta_2$] (B7) at (4.15,2);
\coordinate[label=left: $\beta_3$] (B8) at (4.15,3);
\coordinate[label=below: $\alpha$] (B9) at (3.5,1.8);
\end{tikzpicture}
\]

Iteration 2:
\[
\begin{tikzpicture}[thick]
\coordinate[label=below: $\tau_{\beta_1}a$] (A1) at (0,0);
\coordinate[label=right: $\tau_{\beta_1}^{-1}f$] (A2) at (0,1);
\coordinate[label=right: $g_1$] (A3) at (0,2);
\coordinate[label=right: $g_2$](A4) at (0,3);
\coordinate[label=right: $g_3$] (A5) at (0,4);
\draw (A1) -- (A5);
\foreach \n in {A1,A2,A3,A4,A5} \node at (\n)[circle,fill,inner sep=1.5pt]{};
\coordinate[label=left: $\alpha$] (A6) at (0.1,0.5);
\coordinate[label=left: $\beta_1$] (A7) at (0.15,1.5);
\coordinate[label=left: $\beta_2$] (A8) at (0.15,2.5);
\coordinate[label=left: $\beta_3$] (A9) at (0.15,3.5);
\coordinate[label=left: $+$] (A) at (2,2);
\coordinate[label=below: $\tau_{\alpha}a$] (B1) at (3,0);
\coordinate[label=right: $\tau_{\alpha}^{-1}g_1\tau_{\beta_2}$] (B2) at (3,1);
\coordinate[label=right: $\tau_{\beta_2}^{-1}f$] (B3) at (3,2);
\coordinate[label=right: $g_2$](B4) at (3,3);
\coordinate[label=right: $g_3$] (B5) at (3,4);
\draw (B1) -- (B5);
\foreach \n in {B1,B2,B3,B4,B5} \node at (\n)[circle,fill,inner sep=1.5pt]{};
\coordinate[label=left: $\beta_1$] (B6) at (3.15,0.5);
\coordinate[label=left: $\alpha$] (B7) at (3.1,1.5);
\coordinate[label=left: $\beta_2$] (B8) at (3.15,2.5);
\coordinate[label=left: $\beta_3$] (B9) at (3.15,3.5);
\coordinate[label=left: $+$] (B) at (5.5,2);
\coordinate[label=below: $\tau_\alpha a$] (C1) at (7,0.5);
\coordinate[label=right: $g_1$] (C2) at (7,1.5);
\coordinate[label=right: $\tau_\alpha^{-1}g_2$] (C3) at (7,2.5);
\coordinate[label=right: $g_3$](C4) at (8,3.5);
\coordinate[label=above: $f$] (C5) at (6,3.5);
\draw (C1) --(C2) -- (C3) -- (C4);
\draw (C3) -- (C5);
\foreach \n in {C1,C2,C3,C4,C5} \node at (\n)[circle,fill,inner sep=1.5pt]{};
\coordinate[label=left: $\beta_1$] (C6) at (7.15,1);
\coordinate[label=left: $\beta_2$] (C7) at (7.15,2);
\coordinate[label=above: $\beta_3$] (C8) at (7.5,3);
\coordinate[label=above: $\alpha$] (C9) at (6.5,3);
\end{tikzpicture}
\]

Iteration 3:
\[
\begin{tikzpicture}[thick]
\coordinate[label=below: $\tau_{\beta_1}a$] (A1) at (0,0);
\coordinate[label=right: $\tau_{\beta_1}^{-1}f$] (A2) at (0,1);
\coordinate[label=right: $g_1$] (A3) at (0,2);
\coordinate[label=right: $g_2$](A4) at (0,3);
\coordinate[label=right: $g_3$] (A5) at (0,4);
\draw (A1) -- (A5);
\foreach \n in {A1,A2,A3,A4,A5} \node at (\n)[circle,fill,inner sep=1.5pt]{};
\coordinate[label=left: $\alpha$] (A6) at (0.1,0.5);
\coordinate[label=left: $\beta_1$] (A7) at (0.15,1.5);
\coordinate[label=left: $\beta_2$] (A8) at (0.15,2.5);
\coordinate[label=left: $\beta_3$] (A9) at (0.15,3.5);
\coordinate[label=left: $+$] (A) at (2,2);
\coordinate[label=below: $\tau_{\alpha}a$] (B1) at (3,0);
\coordinate[label=right: $\tau_{\alpha}^{-1}g_1\tau_{\beta_2}$] (B2) at (3,1);
\coordinate[label=right: $\tau_{\beta_2}^{-1}f$] (B3) at (3,2);
\coordinate[label=right: $g_2$](B4) at (3,3);
\coordinate[label=right: $g_3$] (B5) at (3,4);
\draw (B1) -- (B5);
\foreach \n in {B1,B2,B3,B4,B5} \node at (\n)[circle,fill,inner sep=1.5pt]{};
\coordinate[label=left: $\beta_1$] (B6) at (3.15,0.5);
\coordinate[label=left: $\alpha$] (B7) at (3.1,1.5);
\coordinate[label=left: $\beta_2$] (B8) at (3.15,2.5);
\coordinate[label=left: $\beta_3$] (B9) at (3.15,3.5);
\coordinate[label=left: $+$] (B) at (5.5,2);
\coordinate[label=below: $\tau_{\alpha}a$] (C1) at (6.5,0);
\coordinate[label=right: $g_1$] (C2) at (6.5,1);
\coordinate[label=right: $\tau_\alpha^{-1}g_2\tau_{\beta_3}$] (C3) at (6.5,2);
\coordinate[label=right: $\tau_{\beta_3}^{-1}f$](C4) at (6.5,3);
\coordinate[label=right: $g_3$] (C5) at (6.5,4);
\draw (C1) -- (C5);
\foreach \n in {C1,C2,C3,C4,C5} \node at (\n)[circle,fill,inner sep=1.5pt]{};
\coordinate[label=left: $\beta_1$] (C6) at (6.65,0.5);
\coordinate[label=left: $\beta_2$] (C7) at (6.65,1.5);
\coordinate[label=left: $\alpha$] (C8) at (6.6,2.5);
\coordinate[label=left: $\beta_3$] (C9) at (6.65,3.5);
\coordinate[label=left: $+$] (C) at (9,2);
\coordinate[label=below: $\tau_{\alpha}a$] (D1) at (10,0);
\coordinate[label=right: $g_1$] (D2) at (10,1);
\coordinate[label=right: $g_2$] (D3) at (10,2);
\coordinate[label=right: $\tau_\alpha^{-1}g_3$](D4) at (10,3);
\coordinate[label=right: $f$] (D5) at (10,4);
\draw (D1) -- (D5);
\foreach \n in {D1,D2,D3,D4,D5} \node at (\n)[circle,fill,inner sep=1.5pt]{};
\coordinate[label=left: $\beta_1$] (D6) at (10.15,0.5);
\coordinate[label=left: $\beta_2$] (D7) at (10.15,1.5);
\coordinate[label=left: $\beta_3$] (D8) at (10.15,2.5);
\coordinate[label=left: $\alpha$] (D9) at (10.1,3.5);
\end{tikzpicture}
\]
Using the equivalence of forests to integral polynomials, this example shows that
\begin{multline*}
a\left(\int_\alpha f\right)\left(\int_{\beta_1}g_1\left(\int_{\beta_2}g_2\left(\int_{\beta_3}g_3\right)\right)\right) \\
= \tau_{\beta_1}a\int_\alpha\tau_{\beta_1}^{-1}f\left(\int_{\beta_1} g_1\left(\int_{\beta_2}g_2\left(\int_{\beta_3}g_3\right)\right)\right) + \tau_\alpha a\int_{\beta_1}\tau_\alpha^{-1}g_1\tau_{\beta_2}\left(\int_\alpha \tau_{\beta_2}^{-1}f\left(\int_{\beta_2}g_2\left(\int_{\beta_3}g_3\right)\right)\right)\\
+ \tau_\alpha a \int_{\beta_1}g_1\left(\int_{\beta_2}\tau_\alpha^{-1}g_2\tau_{\beta_3}\left(\int_\alpha \tau_{\beta_3}^{-1}f\left(\int_{\beta_3}g_3\right)\right)\right) + \tau_\alpha \int_{\beta_1}g_1\left(\int_{\beta_2}g_2\left(\int_{\beta_3}\tau_\alpha^{-1}g_3\left(\int_\alpha f\right)\right)\right).
\end{multline*}
\end{example}

\bigskip
We can sometimes use previously computed reductions to help simplify Algorithm~\ref{redalg} on more complicated inputs.
\begin{example}
In this example, we apply Algorithm~\ref{redalg} to a tree containing a branching point with more than two branches.

\bigskip
Input:
\[
\begin{tikzpicture}[thick]
\coordinate[label=below: $a$] (A) at (0,0);
\coordinate[label=above: $f$] (B) at (-1.5,1.5);
\coordinate[label=above: $g$] (C) at (0,1.5);
\coordinate[label=above: $h$] (D) at (1.5,1.5);
\draw (B) -- (A) -- (D);
\draw (C) -- (A);
\foreach \n in {A,B,C,D} \node at (\n)[circle,fill,inner sep=1.5pt]{};
\coordinate[label=left: $\alpha$] (E) at (-0.75,0.75);
\coordinate[label=right: $\beta$] (F) at (0,0.75);
\coordinate[label=right: $\gamma$] (G) at (0.75,0.75);
\end{tikzpicture}
\]

Iteration 1:
\[
\begin{tikzpicture}[thick]
\coordinate[label=below: $\tau_\beta a$] (A1) at (0,0);
\coordinate[label=right: $\tau_\beta^{-1}f$] (A2) at (0,1);
\coordinate[label=right: $g$] (A3) at (0,2);
\coordinate[label=right: $h$] (A4) at (1.5,0.75);
\draw (A4) -- (A1) -- (A2) -- (A3);
\foreach \n in {A1,A2,A3,A4} \node at (\n)[circle,fill,inner sep=1.5pt]{};
\coordinate[label=left: $\alpha$] (A5) at (0,0.5);
\coordinate[label=left: $\beta$] (A6) at (0,1.5);
\coordinate[label=right: $\gamma$] (A7) at (0.6,0.2);
\coordinate[label=left: $+$] (A) at (3,1);
\coordinate[label=below: $\tau_\alpha a$] (B1) at (4,0);
\coordinate[label=right: $\tau_\alpha^{-1}g$] (B2) at (4,1);
\coordinate[label=right: $f$] (B3) at (4,2);
\coordinate[label=right: $h$] (B4) at (5.5,0.75);
\draw (B4) -- (B1) -- (B2) -- (B3);
\foreach \n in {B1,B2,B3,B4} \node at (\n)[circle,fill,inner sep=1.5pt]{};
\coordinate[label=left: $\beta$] (B5) at (4,0.5);
\coordinate[label=left: $\alpha$] (B6) at (4,1.5);
\coordinate[label=right: $\gamma$] (B7) at (4.6,0.2);
\end{tikzpicture}
\]

Output:
\[
\begin{tikzpicture}[thick]
\coordinate[label=below: $\tau_{\alpha}\tau_\beta a$] (A1) at (0,0);
\coordinate[label=right: $\tau_{\alpha}^{-1}h$] (A2) at (0,1);
\coordinate[label=right: $\tau_\beta^{-1}f$] (A3) at (0,2);
\coordinate[label=right: $g$] (A4) at (0,3);
\draw (A1) -- (A4);
\foreach \n in {A1,A2,A3,A4} \node at (\n)[circle,fill,inner sep=1.5pt]{};
\coordinate[label=left: $\gamma$] (A5) at (0.1,0.5);
\coordinate[label=left: $\alpha$] (A6) at (0.1,1.5);
\coordinate[label=left: $\beta$] (A7) at (0.1,2.5);
\coordinate[label=left: $+$] (A) at (2,1.5);
\coordinate[label=below: $\tau_\gamma\tau_\beta a$] (B1) at (3,0);
\coordinate[label=right: $\tau_{\gamma}^{-1}f$] (B2) at (3,1);
\coordinate[label=right: $\tau_{\beta}^{-1}h$] (B3) at (3,2);
\coordinate[label=right: $g$] (B4) at (3,3);
\draw (B1) -- (B4);
\foreach \n in {B1,B2,B3,B4} \node at (\n)[circle,fill,inner sep=1.5pt]{};
\coordinate[label=left: $\alpha$] (B5) at (3.1,0.5);
\coordinate[label=left: $\gamma$] (B6) at (3.1,1.5);
\coordinate[label=left: $\beta$] (B7) at (3.1,2.5);
\coordinate[label=left: $+$] (B) at (5,1.5);
\coordinate[label=below: $\tau_\gamma\tau_\beta a$] (C1) at (6,0);
\coordinate[label=right: $\tau_\beta^{-1}f$] (C2) at (6,1);
\coordinate[label=right: $\tau_\gamma^{-1}g$] (C3) at (6,2);
\coordinate[label=right: $h$] (C4) at (6,3);
\draw (C1) -- (C4);
\foreach \n in {C1,C2,C3,C4} \node at (\n)[circle,fill,inner sep=1.5pt]{};
\coordinate[label=left: $\alpha$] (C5) at (6.1,0.5);
\coordinate[label=left: $\beta$] (C6) at (6.1,1.5);
\coordinate[label=left: $\gamma$] (C7) at (6.1,2.5);
\coordinate[label=left: $+$] (C) at (8,1.5);
\coordinate[label=below: $\tau_\beta\tau_\alpha a$] (D1) at (9,0);
\coordinate[label=right: $\tau_\beta^{-1}h$] (D2) at (9,1);
\coordinate[label=right: $\tau_\alpha^{-1}g$] (D3) at (9,2);
\coordinate[label=right: $f$] (D4) at (9,3);
\draw (D1) -- (D4);
\foreach \n in {D1,D2,D3,D4} \node at (\n)[circle,fill,inner sep=1.5pt]{};
\coordinate[label=left: $\gamma$] (D5) at (9.1,0.5);
\coordinate[label=left: $\beta$] (D6) at (9.1,1.5);
\coordinate[label=left: $\alpha$] (D7) at (9.1,2.5);
\coordinate[label=left: $+$] (D) at (11,1.5);
\coordinate[label=below: $\tau_\gamma\tau_\alpha a$] (E1) at (12,0);
\coordinate[label=right: $\tau_{\gamma}^{-1}g$] (E2) at (12,1);
\coordinate[label=right: $\tau_\alpha^{-1}h$] (E3) at (12,2);
\coordinate[label=right: $f$] (E4) at (12,3);
\draw (E1) -- (E4);
\foreach \n in {E1,E2,E3,E4} \node at (\n)[circle,fill,inner sep=1.5pt]{};
\coordinate[label=left: $\beta$] (E5) at (12.1,0.5);
\coordinate[label=left: $\gamma$] (E6) at (12.1,1.5);
\coordinate[label=left: $\alpha$] (E7) at (12.1,2.5);
\coordinate[label=left: $+$] (E) at (14,1.5);
\coordinate[label=below: $\tau_\gamma\tau_\alpha a$] (F1) at (15,0);
\coordinate[label=right: $\tau_\alpha^{-1}g$] (F2) at (15,1);
\coordinate[label=right: $\tau_\gamma^{-1}f$] (F3) at (15,2);
\coordinate[label=right: $h$] (F4) at (15,3);
\draw (F1) -- (F4);
\foreach \n in {F1,F2,F3,F4} \node at (\n)[circle,fill,inner sep=1.5pt]{};
\coordinate[label=left: $\beta$] (F5) at (15.1,0.5);
\coordinate[label=left: $\alpha$] (F6) at (15.1,1.5);
\coordinate[label=left: $\gamma$] (F7) at (15.1,2.5);
\end{tikzpicture}
\]
In this example, instead of working through each iteration of Algorithm~\ref{redalg} (which would have taken an additional four iterations), following the first iteration of the algorithm we applied the results of Example~\ref{iterated2} to complete the reduction.  In the language of integrals, this example says
\begin{multline*}
a\left(\int_\alpha f\right)\left(\int_\beta g\right)\left(\int_\gamma h\right) = \tau_\alpha\tau_\beta a\int_\gamma \tau_\alpha^{-1}h\left(\int_\alpha \tau_\beta^{-1}f\left(\int_\beta g\right)\right) + \tau_\gamma\tau_\beta a\int_\alpha \tau_\gamma^{-1}f\left(\int_\gamma\tau_\beta^{-1}h\left(\int_\beta g\right)\right) \\
+ \tau_\gamma\tau_\beta a\int_\alpha \tau_\beta^{-1}f\left(\int_\beta\tau_\gamma^{-1}g\left(\int_\gamma h\right)\right)+ \tau_\beta\tau_\alpha a\int_\gamma \tau_\beta^{-1}h\left(\int_\beta \tau_\alpha^{-1}g\left(\int_\alpha f\right)\right)\\
+\tau_\gamma \tau_\alpha a \int_\beta \tau_\gamma^{-1}g\left(\int_\gamma \tau_\alpha^{-1}h \left(\int_\alpha f\right)\right) + \tau_\gamma\tau_\alpha a \int_\beta \tau_\alpha^{-1}g\left(\int_\alpha \tau_\gamma^{-1}f\left(\int_\gamma h\right)\right).
\end{multline*}
\end{example}

\begin{example} In this example we apply Algorithm~\ref{redalg} to a tree seen earlier.

\bigskip
Input:
\[
\begin{tikzpicture}[thick]
\coordinate [label=below: $f_1$] (A) at (0,0);
\coordinate [label=right: $f_2$] (B) at (0,0.75);
\coordinate [label=right: $f_3$] (C) at (1,0.5);
\coordinate [label=left: $f_4$] (D) at (-0.75,1.75);
\coordinate [label=right: $f_5$] (E) at (0.75,1.75);
\draw (C) -- (A) -- (B);
\draw (E) -- (B) -- (D);
\foreach \n in {A,B,C,D,E} \node at (\n)[circle,fill,inner sep=1.5pt]{};
\coordinate [label=left: $\alpha$] (F) at (0,0.375);
\coordinate [label=below: $\beta$] (G) at (0.5,0.3);
\coordinate [label=left: $\gamma$] (H) at (-0.27,1.15);
\coordinate [label=right: $\delta$] (I) at (0.27,1.15);
\end{tikzpicture}
\]

Iteration 1:
\[
\begin{tikzpicture}[thick]
\coordinate[label=below: $f_1$] (A1) at (0,0);
\coordinate[label=right: $\tau_\delta f_2$] (A2) at (0,1);
\coordinate[label=right: $\tau_\delta^{-1}f_4$] (A3) at (0,2);
\coordinate[label=right: $f_5$] (A4) at (0,3);
\coordinate[label=right: $f_3$] (A5) at (1.5,0.75);
\draw (A5) -- (A1) -- (A2) -- (A3) -- (A4);
\foreach \n in {A1,A2,A3,A4,A5} \node at (\n)[circle,fill,inner sep=1.5pt]{};
\coordinate[label=left: $\alpha$] (A6) at (0.1,0.5);
\coordinate[label=left: $\gamma$] (A7) at (0.1,1.5);
\coordinate[label=left: $\delta$] (A8) at (0.1,2.5);
\coordinate[label=right: $\beta$] (A9) at (0.6,0.2);
\coordinate[label=left: $+$] (A) at (3,1.5);
\coordinate[label=below: $f_1$] (B1) at (4,0);
\coordinate[label=right: $\tau_\gamma f_2$] (B2) at (4,1);
\coordinate[label=right: $\tau_\gamma^{-1}f_5$] (B3) at (4,2);
\coordinate[label=right: $f_4$] (B4) at (4,3);
\coordinate[label=right: $f_3$] (B5) at (5.5,0.75);
\draw (B5) -- (B1) -- (B2) -- (B3) -- (B4);
\foreach \n in {B1,B2,B3,B4,B5} \node at (\n)[circle,fill,inner sep=1.5pt]{};
\coordinate[label=left: $\alpha$] (B6) at (4.1,0.5);
\coordinate[label=left: $\delta$] (B7) at (4.1,1.5);
\coordinate[label=left: $\gamma$] (B8) at (4.1,2.5);
\coordinate[label=right: $\beta$] (B9) at (4.6,0.2);
\end{tikzpicture}
\]

Output:
\[
\begin{tikzpicture}[thick]
\coordinate[label=below: $\tau_\alpha f_1$] (A1) at (0,0);
\coordinate[label=right: $\tau_\alpha^{-1}f_3$] (A2) at (0,1);
\coordinate[label=right: $\tau_\delta f_2$] (A3) at (0,2);
\coordinate[label=right: $\tau_\delta^{-1}f_4$](A4) at (0,3);
\coordinate[label=right: $f_5$] (A5) at (0,4);
\draw (A1) -- (A5);
\foreach \n in {A1,A2,A3,A4,A5} \node at (\n)[circle,fill,inner sep=1.5pt]{};
\coordinate[label=left: $\beta$] (A6) at (0.1,0.5);
\coordinate[label=left: $\alpha$] (A7) at (0.1,1.5);
\coordinate[label=left: $\gamma$] (A8) at (0.1,2.5);
\coordinate[label=left: $\delta$] (A9) at (0.1,3.5);
\coordinate[label=left: $+$] (A) at (2,2);
\coordinate[label=below: $\tau_\beta f_1$] (B1) at (3,0);
\coordinate[label=right: $\tau_\beta^{-1}\tau_\delta f_2 \tau_\gamma$] (B2) at (3,1);
\coordinate[label=right: $\tau_\gamma^{-1}f_3$] (B3) at (3,2);
\coordinate[label=right: $\tau_\delta^{-1}f_4$](B4) at (3,3);
\coordinate[label=right: $f_5$] (B5) at (3,4);
\draw (B1) -- (B5);
\foreach \n in {B1,B2,B3,B4,B5} \node at (\n)[circle,fill,inner sep=1.5pt]{};
\coordinate[label=left: $\alpha$] (B6) at (3.1,0.5);
\coordinate[label=left: $\beta$] (B7) at (3.1,1.5);
\coordinate[label=left: $\gamma$] (B8) at (3.1,2.5);
\coordinate[label=left: $\delta$] (B9) at (3.1,3.5);
\coordinate[label=left: $+$] (B) at (5.5,2);
\coordinate[label=below: $\tau_\beta f_1$] (C1) at (6.5,0);
\coordinate[label=right: $\tau_\delta f_2$] (C2) at (6.5,1);
\coordinate[label=right: $\tau_\beta^{-1}f_4$] (C3) at (6.5,2);
\coordinate[label=right: $\tau_\delta^{-1}f_3$](C4) at (6.5,3);
\coordinate[label=right: $f_5$] (C5) at (6.5,4);
\draw (C1) -- (C5);
\foreach \n in {C1,C2,C3,C4,C5} \node at (\n)[circle,fill,inner sep=1.5pt]{};
\coordinate[label=left: $\alpha$] (C6) at (6.6,0.5);
\coordinate[label=left: $\gamma$] (C7) at (6.6,1.5);
\coordinate[label=left: $\beta$] (C8) at (6.6,2.5);
\coordinate[label=left: $\delta$] (C9) at (6.6,3.5);
\coordinate[label=left: $+$] (C) at (9,2);
\coordinate[label=below: $\tau_\beta f_1$] (D1) at (10,0);
\coordinate[label=right: $\tau_\delta f_2$] (D2) at (10,1);
\coordinate[label=right: $\tau_\delta^{-1}f_4$] (D3) at (10,2);
\coordinate[label=right: $\tau_\beta^{-1}f_5$](D4) at (10,3);
\coordinate[label=right: $f_3$] (D5) at (10,4);
\draw (D1) -- (D5);
\foreach \n in {D1,D2,D3,D4,D5} \node at (\n)[circle,fill,inner sep=1.5pt]{};
\coordinate[label=left: $\alpha$] (D6) at (10.1,0.5);
\coordinate[label=left: $\gamma$] (D7) at (10.1,1.5);
\coordinate[label=left: $\delta$] (D8) at (10.1,2.5);
\coordinate[label=left: $\beta$] (D9) at (10.1,3.5);
\coordinate[label=left: $+$] (D) at (0.5,-3);
\coordinate[label=below: $\tau_\alpha f_1$] (E1) at (1.5,-5);
\coordinate[label=right: $\tau_\alpha^{-1}f_3$] (E2) at (1.5,-4);
\coordinate[label=right: $\tau_\gamma f_2$] (E3) at (1.5,-3);
\coordinate[label=right: $\tau_\gamma^{-1}f_5$](E4) at (1.5,-2);
\coordinate[label=right: $f_4$] (E5) at (1.5,-1);
\draw (E1) -- (E5);
\foreach \n in {E1,E2,E3,E4,E5} \node at (\n)[circle,fill,inner sep=1.5pt]{};
\coordinate[label=left: $\beta$] (E6) at (1.6,-4.5);
\coordinate[label=left: $\alpha$] (E7) at (1.6,-3.5);
\coordinate[label=left: $\delta$] (E8) at (1.6,-2.5);
\coordinate[label=left: $\gamma$] (E9) at (1.6,-1.5);
\coordinate[label=left: $+$] (E) at (3.5,-3);
\coordinate[label=below: $\tau_\beta f_1$] (F1) at (4.5,-5);
\coordinate[label=right: $\tau_\beta^{-1}\tau_\gamma f_2 \tau_\delta$] (F2) at (4.5,-4);
\coordinate[label=right: $\tau_\delta^{-1}f_3$] (F3) at (4.5,-3);
\coordinate[label=right: $\tau_\gamma^{-1}f_5$](F4) at (4.5,-2);
\coordinate[label=right: $f_4$] (F5) at (4.5,-1);
\draw (F1) -- (F5);
\foreach \n in {F1,F2,F3,F4,F5} \node at (\n)[circle,fill,inner sep=1.5pt]{};
\coordinate[label=left: $\alpha$] (F6) at (4.6,-4.5);
\coordinate[label=left: $\beta$] (F7) at (4.6,-3.5);
\coordinate[label=left: $\delta$] (F8) at (4.6,-2.5);
\coordinate[label=left: $\gamma$] (F9) at (4.6,-1.5);
\coordinate[label=left: $+$] (F) at (7,-3);
\coordinate[label=below: $\tau_\beta f_1$] (G1) at (8,-5);
\coordinate[label=right: $\tau_\gamma f_2$] (G2) at (8,-4);
\coordinate[label=right: $\tau_\beta^{-1}f_5$] (G3) at (8,-3);
\coordinate[label=right: $\tau_\gamma^{-1}f_3$](G4) at (8,-2);
\coordinate[label=right: $f_4$] (G5) at (8,-1);
\draw (G1) -- (G5);
\foreach \n in {G1,G2,G3,G4,G5} \node at (\n)[circle,fill,inner sep=1.5pt]{};
\coordinate[label=left: $\alpha$] (G6) at (8.1,-4.5);
\coordinate[label=left: $\delta$] (G7) at (8.1,-3.5);
\coordinate[label=left: $\beta$] (G8) at (8.1,-2.5);
\coordinate[label=left: $\gamma$] (G9) at (8.1,-1.5);
\coordinate[label=left: $+$] (G) at (10.5,-3);
\coordinate[label=below: $\tau_\beta f_1$] (H1) at (11.5,-5);
\coordinate[label=right: $\tau_\gamma f_2$] (H2) at (11.5,-4);
\coordinate[label=right: $\tau_\gamma^{-1}f_5$] (H3) at (11.5,-3);
\coordinate[label=right: $\tau_\beta^{-1}f_4$](H4) at (11.5,-2);
\coordinate[label=right: $f_3$] (H5) at (11.5,-1);
\draw (H1) -- (H5);
\foreach \n in {H1,H2,H3,H4,H5} \node at (\n)[circle,fill,inner sep=1.5pt]{};
\coordinate[label=left: $\alpha$] (H6) at (11.6,-4.5);
\coordinate[label=left: $\delta$] (H7) at (11.6,-3.5);
\coordinate[label=left: $\gamma$] (H8) at (11.6,-2.5);
\coordinate[label=left: $\beta$] (H9) at (11.6,-1.5);
\end{tikzpicture}
\]
As in the previous example, in this example following the first iteration of Algorithm~\ref{redalg} we applied the results of Example~\ref{iterated3} to complete the reduction.
\end{example}

\newpage
We conclude this section with a generalization of Examples~\ref{iterated2} and~\ref{iterated3}.
\begin{theorem}\label{iteratedlemma}
Let $a,f,g_1,\dots,g_m \in C(\R)$ and $\alpha,\beta_1,\dots,\beta_m \in \Omega$.  Then in $\mathfrak{E}(\Omega)$:
\begin{equation}\label{eq:iteratedlemma}
\begin{aligned}
\begin{tikzpicture}[thick]
\coordinate[label=below: $a$] (A1) at (0,0);
\coordinate[label=left: $f$] (A2) at (-1,1);
\coordinate[label=right: $g_1$] (A3) at (1,1);
\coordinate[label=right: $g_2$] (A4) at (1,2);
\coordinate[label=right: $g_{m-1}$] (A5) at (1,3);
\coordinate[label=right: $g_m$] (A6) at (1,4);
\draw (A2) -- (A1) -- (A3) -- (A4);
\draw[loosely dotted] (A4) -- (A5);
\draw (A5) -- (A6);
\foreach \n in {A1,A2,A3,A4,A5,A6} \node at (\n)[circle,fill,inner sep=1.5pt]{};
\coordinate[label=left: $\alpha$] (A7) at (-0.4,0.4);
\coordinate[label=left: $\beta_1$] (A8) at (0.7,0.7);
\coordinate[label=left: $\beta_2$] (A9) at (1.15,1.5);
\coordinate[label=left: $\beta_m$] (A10) at (1.15,3.5);
\coordinate[label=left: $\text{=}$] (A) at (3,2);
\coordinate[label=below: $\tau_{\beta_1}a$] (B1) at (4.5,-0.5);
\coordinate[label=right: $\tau_{\beta_1}^{-1}f$] (B2) at (4.5,0.5);
\coordinate[label=right: $g_1$] (B3) at (4.5,1.5);
\coordinate[label=right: $g_2$] (B4) at (4.5,2.5);
\coordinate[label=right: $g_{m-1}$] (B5) at (4.5,3.5);
\coordinate[label=right: $g_m$] (B6) at (4.5,4.5);
\draw (B1) -- (B4);
\draw[loosely dotted] (B4) -- (B5);
\draw (B5) -- (B6);
\foreach \n in {B1,B2,B3,B4,B5,B6} \node at (\n)[circle,fill,inner sep=1.5pt]{};
\coordinate[label=left: $\alpha$] (B7) at (4.6,0);
\coordinate[label=left: $\beta_1$] (B8) at (4.65,1);
\coordinate[label=left: $\beta_2$] (B9) at (4.65,2);
\coordinate[label=left: $\beta_m$] (B10) at (4.65,4);
\coordinate[label=left: $+$] (B) at (6,2);
\coordinate[label=left: $\displaystyle \sum_{i=1}^{m-1}$] (B') at (7,2);
\coordinate[label=below: $\tau_\alpha a$] (C1) at (7.5,-2);
\coordinate[label=right: $g_1$] (C2) at (7.5,-1);
\coordinate[label=right: $g_2$] (C3) at (7.5,0);
\coordinate[label=right: $g_{i-1}$] (C4) at (7.5,1);
\coordinate[label=right: $\tau_\alpha^{-1}g_i\tau_{\beta_{i+1}}$] (C5) at (7.5,2);
\coordinate[label=right: $\tau_{\beta_{i+1}}^{-1}f$] (C6) at (7.5,3);
\coordinate[label=right: $g_{i+1}$] (C7) at (7.5,4);
\coordinate[label=right: $g_{m-1}$] (C8) at (7.5,5);
\coordinate[label=right: $g_m$] (C9) at (7.5,6);
\draw (C1) -- (C3);
\draw[loosely dotted] (C3) -- (C4);
\draw (C4) -- (C7);
\draw[loosely dotted] (C7) -- (C8);
\draw (C8) -- (C9);
\foreach \n in {C1,C2,C3,C4,C5,C6,C7,C8,C9} \node at (\n)[circle,fill,inner sep=1.5pt]{};
\coordinate[label=left: $\beta_1$] (C10) at (7.65,-1.5);
\coordinate[label=left: $\beta_2$] (C11) at (7.65,-0.5);
\coordinate[label=left: $\beta_i$] (C12) at (7.65,1.5);
\coordinate[label=left: $\alpha$] (C13) at (7.6,2.5);
\coordinate[label=left: $\beta_{i+1}$] (C14) at (7.65,3.5);
\coordinate[label=left: $\beta_m$] (C15) at (7.65,5.5);
\coordinate[label=left: $+$] (C) at (10,2);
\coordinate[label=below: $\tau_\alpha a$] (D1) at (11,-0.5);
\coordinate[label=right: $g_1$] (D2) at (11,0.5);
\coordinate[label=right: $g_2$] (D3) at (11,1.5);
\coordinate[label=right: $g_{m-1}$] (D4) at (11,2.5);
\coordinate[label=right: $\tau_\alpha^{-1}g_m$] (D5) at (11,3.5);
\coordinate[label=right: $f$] (D6) at (11,4.5);
\draw (D1) -- (D3);
\draw[loosely dotted] (D3) -- (D4);
\draw (D4) -- (D6);
\foreach \n in {D1,D2,D3,D4,D5,D6} \node at (\n)[circle,fill,inner sep=1.5pt]{};
\coordinate[label=left: $\beta_1$] (D7) at (11.15,0);
\coordinate[label=left: $\beta_2$] (D8) at (11.15,1);
\coordinate[label=left: $\beta_m$] (D9) at (11.15,3);
\coordinate[label=left: $\alpha$] (D10) at (11.1,4);
\end{tikzpicture}
\end{aligned}
\end{equation}
\end{theorem}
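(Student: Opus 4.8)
The plan is to argue by induction on $m$. When $m = 1$, Eq.~\eqref{eq:iteratedlemma} is exactly the identity of Theorem~\ref{theorem:RBtrees} applied with $g = g_1$ and $\beta = \beta_1$: the internal sum $\sum_{i=1}^{m-1}$ is then empty, so the right-hand side is simply the two-term sum of Eq.~\eqref{eq:RBtrees}, and there is nothing further to prove.

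For the inductive step, suppose $m \ge 2$ and that Eq.~\eqref{eq:iteratedlemma} holds with $m$ replaced by $m-1$. Let $T$ be the tree on the left-hand side of Eq.~\eqref{eq:iteratedlemma}. Its unique branching point is the root $a$, whose two terminal branches are the length-one branch to the leaf $f$ (edge $\alpha$) and the chain to the leaf $g_m$ (edges $\beta_1,\dots,\beta_m$). Applying Theorem~\ref{theorem:RBtrees} to these two branches --- equivalently, performing a single iteration of step~(2) of Algorithm~\ref{redalg} at $a$, in the case $n_1 = 1$, $n_2 = m$ --- replaces $T$ by $T_1' + T_2'$. Since the $f$-branch has length one, $T_1'$ has no branching point, and it is precisely the chain appearing as the first tree on the right-hand side of Eq.~\eqref{eq:iteratedlemma}. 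The tree $T_2'$ has root $\tau_\alpha a$, a $\beta_1$-edge to a vertex labeled $\tau_\alpha^{-1}g_1$, and, attached to that vertex, the leaf $f$ by an $\alpha$-edge together with the chain $g_2 \to \cdots \to g_m$ by edges $\beta_2,\dots,\beta_m$. The key observation is that the subtree of $T_2'$ rooted at $\tau_\alpha^{-1}g_1$ is an instance of the left-hand side of Eq.~\eqref{eq:iteratedlemma} with $m$ replaced by $m-1$: namely with $a$ replaced by $\tau_\alpha^{-1}g_1$, each $g_i$ replaced by $g_{i+1}$, and each $\beta_i$ replaced by $\beta_{i+1}$, while $f$ and $\alpha$ are unchanged.

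Because the identity for $m-1$ is, by induction, a genuine identity of integral polynomials (valid for all choices of the vertex labels), it may be applied to this subtree of $T_2'$ while leaving the rest of $T_2'$ in place, exactly as the twisted Rota-Baxter identity is applied inside a larger tree in step~(2) of Algorithm~\ref{redalg}. Thus $T_2'$ becomes the sum, over the $m$ chains on the right-hand side of the $(m-1)$-version of Eq.~\eqref{eq:iteratedlemma}, of the tree obtained by reattaching that chain to the root $\tau_\alpha a$ by a $\beta_1$-edge (this uses that the grafting product and the extension operator $\Lambda_{\beta_1}$ preserve such identities). Using commutativity of multiplication in $C(\R)$ to normalize root labels such as $\tau_{\beta_2}\tau_\alpha^{-1}g_1 = \tau_\alpha^{-1}g_1\tau_{\beta_2}$, together with a routine re-indexing, one checks that the chain coming from the ``first tree'' of the inductive hypothesis becomes the $i=1$ term of $\sum_{i=1}^{m-1}$ in Eq.~\eqref{eq:iteratedlemma}, the chain coming from the $j$-th term of its internal sum (for $1 \le j \le m-2$) becomes the $i=j+1$ term, and the chain coming from its ``last tree'' becomes the final term of Eq.~\eqref{eq:iteratedlemma}. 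Together with $T_1'$, these account for every term on the right-hand side, completing the induction.

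The routine part of this argument is checking that the edge labels and twist factors line up after the index shift $\beta_i \mapsto \beta_{i+1}$. The one point I expect to require genuine care is the re-indexing of the internal sum: it arises because, in passing to $T_2'$, the chain $g_1 \to \cdots \to g_m$ is split just above $\tau_\alpha^{-1}g_1$, which shifts the position of every $g_i$ by one and forces the relabelling $g_i \mapsto g_{i+1}$ throughout the inductive hypothesis.
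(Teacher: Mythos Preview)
Your proposal is correct and follows essentially the same approach as the paper's proof: induction on $m$, a single application of Theorem~\ref{theorem:RBtrees} at the root to produce the first chain plus a tree whose subtree at $\tau_\alpha^{-1}g_1$ is the $(m-1)$-instance, then the inductive hypothesis and a reattachment via the $\beta_1$-edge followed by the re-indexing that merges the ``first tree'' of the hypothesis into the $i=1$ term of the sum. Your anticipation of the index shift and the commutativity normalization $\tau_{\beta_2}\tau_\alpha^{-1}g_1 = \tau_\alpha^{-1}g_1\tau_{\beta_2}$ matches exactly the bookkeeping the paper carries out.
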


In integral notation, the left-hand side of Eq.~\eqref{eq:iteratedlemma} is the produce of a single integral with an iterated integral containing $m$ iterates, so the equation translates to
\begin{multline*}
a\left(\int_\alpha f\right)\left(\int_{\beta_1}g_1\left(\int_{\beta_2}g_2\cdots\left(\int_{\beta_m}g_m\right)\cdots\right)\right) \\
= \tau_{\beta_1}a\int_\alpha \tau_{\beta_1}^{-1}f\left(\int_{\beta_1}g_1\left(\int_{\beta_2}g_2\cdots\left(\int_{\beta_m}g_m\right)\cdots\right)\right) \\
+ \sum_{i=1}^{m-1} \tau_\alpha a \int_{\beta_1}g_1\left(\int_{\beta_2}g_2\cdots\left(\int_{\beta_{i-1}}g_{i-1}\left(\int_{\beta_i}\tau_\alpha^{-1}g_i\tau_{\beta_{i+1}}\left(\int_\alpha \tau_{\beta_{i+1}}^{-1}f\left(\int_{\beta_{i+1}} g_{i+1}\cdots \left(\int_{\beta_m}g_m\right)\cdots\right)\right)\right)\right)\cdots\right)\\
+ \tau_\alpha a\int_{\beta_1}g_1\left(\int_{\beta_2}g_2\cdots\left(\int_{\beta_{m-1}}g_{m-1}\left(\int_{\beta_m}\tau_\alpha^{-1}g_m\left(\int_\alpha f\right)\right)\right)\cdots\right).
\end{multline*}

\begin{proof} We will prove this by induction on $m$.  In the base case, we have the tree

\[
\begin{tikzpicture}[thick]
\coordinate [label=right: $a$] (A) at (0,0);
\coordinate [label=left: $f$] (B) at (-0.75,1);
\coordinate [label=right: $g$] (C) at (0.75,1);
\coordinate [label=left: $\alpha$] (D) at (-0.35,0.5);
\coordinate [label=right: $\beta$] (E) at (0.35,0.5);
\draw (B) -- (A) -- (C);
\foreach \n in {A,B,C} \node at (\n)[circle,fill,inner sep=1.5pt]{};
\end{tikzpicture}
\]
Here we can apply Theorem~\ref{theorem:RBtrees} to obtain

\[
\begin{tikzpicture}[thick]
\coordinate [label=right: $a$] (A) at (0,0);
\coordinate [label=left: $f$] (B) at (-0.75,1);
\coordinate [label=right: $g$] (C) at (0.75,1);
\coordinate [label=left: $\alpha$] (D) at (-0.35,0.5);
\coordinate [label=right: $\beta$] (E) at (0.35,0.5);
\draw (B) -- (A) -- (C);
\coordinate [label=left: $\text{=}$] (F) at (2.25,0.5);
\coordinate [label=right: $\tau_\beta a$] (A1) at (3,-0.5);
\coordinate [label=right: $\tau_\beta^{-1} f$] (A2) at (3,0.5);
\coordinate [label=right: $g$] (A3) at (3,1.5);
\coordinate [label=left: $\alpha$] (A4) at (3,0);
\coordinate [label=left: $\beta$] (A5) at (3,1);
\draw (A1) -- (A2) -- (A3);
\coordinate [label=left: $+$] (G) at (5,0.5);
\coordinate [label=right: $\tau_\alpha a$] (B1) at (5.75,-0.5);
\coordinate [label=right: $\tau_\alpha^{-1} g$] (B2) at (5.75,0.5);
\coordinate [label=right: $f$] (B3) at (5.75,1.5);
\coordinate [label=left: $\beta$] (B4) at (5.75,0);
\coordinate [label=left: $\alpha$] (B5) at (5.75,1);
\draw (B1) -- (B2) -- (B3);
\foreach \n in {A,B,C,A1,A2,A3,B1,B2,B3} \node at (\n)[circle,fill,inner sep=1.5pt]{};
\end{tikzpicture}
\]
This agrees with Eq.~\eqref{eq:iteratedlemma} when $m=1$ (note when $m=1$, the middle term of the sum on the right-hand-side of Eq.~\eqref{eq:iteratedlemma} is empty).

For the inductive step, let $m > 1$ and consider the following tree:
\[
\begin{tikzpicture}[thick]
\coordinate[label=below: $a$] (A1) at (0,0);
\coordinate[label=left: $f$] (A2) at (-1,1);
\coordinate[label=right: $g_1$] (A3) at (1,1);
\coordinate[label=right: $g_2$] (A4) at (1,2);
\coordinate[label=right: $g_{m-1}$] (A5) at (1,3);
\coordinate[label=right: $g_m$] (A6) at (1,4);
\draw (A2) -- (A1) -- (A3) -- (A4);
\draw[loosely dotted] (A4) -- (A5);
\draw (A5) -- (A6);
\foreach \n in {A1,A2,A3,A4,A5,A6} \node at (\n)[circle,fill,inner sep=1.5pt]{};
\coordinate[label=left: $\alpha$] (A7) at (-0.4,0.4);
\coordinate[label=left: $\beta_1$] (A8) at (0.7,0.7);
\coordinate[label=left: $\beta_2$] (A9) at (1.15,1.5);
\coordinate[label=left: $\beta_m$] (A10) at (1.15,3.5);
\end{tikzpicture}
\]
After applying Theorem~\ref{theorem:RBtrees} once, the above tree is equivalent to the following:
\begin{equation}\label{lemma:inductivestep}
\begin{aligned}
\begin{tikzpicture}[thick]
\coordinate[label=below: $\tau_{\beta_1}a$] (A1) at (0,0);
\coordinate[label=right: $\tau_{\beta_1}^{-1}f$] (A2) at (0,1);
\coordinate[label=right: $g_1$] (A3) at (0,2);
\coordinate[label=right: $g_2$] (A4) at (0,3);
\coordinate[label=right: $g_{m-1}$] (A5) at (0,4);
\coordinate[label=right: $g_m$] (A6) at (0,5);
\draw (A1) -- (A4);
\draw[loosely dotted] (A4) -- (A5);
\draw (A5) -- (A6);
\foreach \n in {A1,A2,A3,A4,A5,A6} \node at (\n)[circle,fill,inner sep=1.5pt]{};
\coordinate[label=left: $\alpha$] (A7) at (0.1,0.5);
\coordinate[label=left: $\beta_1$] (A8) at (0.15,1.5);
\coordinate[label=left: $\beta_2$] (A9) at (0.15,2.5);
\coordinate[label=left: $\beta_m$] (A10) at (0.15,4.5);
\coordinate[label=left: $+$] (A) at (2,2.5);
\coordinate[label=below: $\tau_\alpha a$] (B1) at (4,0);
\coordinate[label=right: $\tau_\alpha^{-1}g_1$] (B2) at (4,1);
\coordinate[label=left: $f$] (B3) at (3,2);
\coordinate[label=right: $g_2$] (B4) at (5,2);
\coordinate[label=right: $g_3$] (B5) at (5,3);
\coordinate[label=right: $g_{m-1}$] (B6) at (5,4);
\coordinate[label=right: $g_m$] (B7) at (5,5);
\draw (B1) -- (B2) -- (B4) -- (B5);
\draw (B2) -- (B3);
\draw[loosely dotted] (B5) -- (B6);
\draw (B6) -- (B7);
\foreach \n in {B1,B2,B3,B4,B5,B6,B7} \node at (\n)[circle,fill,inner sep=1.5pt]{};
\coordinate[label=left: $\beta_1$] (B8) at (4.15,0.5);
\coordinate[label=left: $\alpha$] (B9) at (3.6,1.4);
\coordinate[label=left: $\beta_2$] (B10) at (4.7,1.7);
\coordinate[label=left: $\beta_3$] (B11) at (5.15,2.5);
\coordinate[label=left: $\beta_m$] (B12) at (5.15,4.5);
\end{tikzpicture}
\end{aligned}
\end{equation}

By the inductive hypothesis,
\[
\begin{tikzpicture}[thick]
\coordinate[label=below: $\tau_\alpha^{-1}g_1$] (A1) at (0,0);
\coordinate[label=left: $f$] (A2) at (-1,1);
\coordinate[label=right: $g_2$] (A3) at (1,1);
\coordinate[label=right: $g_3$] (A4) at (1,2);
\coordinate[label=right: $g_{m-1}$] (A5) at (1,3);
\coordinate[label=right: $g_m$] (A6) at (1,4);
\draw (A2) -- (A1) -- (A3) -- (A4);
\draw[loosely dotted] (A4) -- (A5);
\draw (A5) -- (A6);
\foreach \n in {A1,A2,A3,A4,A5,A6} \node at (\n)[circle,fill,inner sep=1.5pt]{};
\coordinate[label=left: $\alpha$] (A7) at (-0.4,0.4);
\coordinate[label=left: $\beta_2$] (A8) at (0.7,0.7);
\coordinate[label=left: $\beta_3$] (A9) at (1.15,1.5);
\coordinate[label=left: $\beta_m$] (A10) at (1.15,3.5);
\coordinate[label=left: $\text{=}$] (A) at (3,2);
\coordinate[label=below: $\tau_\alpha^{-1}g_1\tau_{\beta_2}$] (B1) at (4.5,-0.5);
\coordinate[label=right: $\tau_{\beta_2}^{-1}f$] (B2) at (4.5,0.5);
\coordinate[label=right: $g_2$] (B3) at (4.5,1.5);
\coordinate[label=right: $g_3$] (B4) at (4.5,2.5);
\coordinate[label=right: $g_{m-1}$] (B5) at (4.5,3.5);
\coordinate[label=right: $g_m$] (B6) at (4.5,4.5);
\draw (B1) -- (B4);
\draw[loosely dotted] (B4) -- (B5);
\draw (B5) -- (B6);
\foreach \n in {B1,B2,B3,B4,B5,B6} \node at (\n)[circle,fill,inner sep=1.5pt]{};
\coordinate[label=left: $\alpha$] (B7) at (4.6,0);
\coordinate[label=left: $\beta_2$] (B8) at (4.65,1);
\coordinate[label=left: $\beta_3$] (B9) at (4.65,2);
\coordinate[label=left: $\beta_m$] (B10) at (4.65,4);
\coordinate[label=left: $+$] (B) at (6,2);
\coordinate[label=left: $\displaystyle \sum_{i=2}^{m-1}$] (B') at (7,2);
\coordinate[label=below: $g_1$] (C1) at (7.5,-2);
\coordinate[label=right: $g_2$] (C2) at (7.5,-1);
\coordinate[label=right: $g_3$] (C3) at (7.5,0);
\coordinate[label=right: $g_{i-1}$] (C4) at (7.5,1);
\coordinate[label=right: $\tau_\alpha^{-1}g_i\tau_{\beta_{i+1}}$] (C5) at (7.5,2);
\coordinate[label=right: $\tau_{\beta_{i+1}}^{-1}f$] (C6) at (7.5,3);
\coordinate[label=right: $g_{i+1}$] (C7) at (7.5,4);
\coordinate[label=right: $g_{m-1}$] (C8) at (7.5,5);
\coordinate[label=right: $g_m$] (C9) at (7.5,6);
\draw (C1) -- (C3);
\draw[loosely dotted] (C3) -- (C4);
\draw (C4) -- (C7);
\draw[loosely dotted] (C7) -- (C8);
\draw (C8) -- (C9);
\foreach \n in {C1,C2,C3,C4,C5,C6,C7,C8,C9} \node at (\n)[circle,fill,inner sep=1.5pt]{};
\coordinate[label=left: $\beta_2$] (C10) at (7.65,-1.5);
\coordinate[label=left: $\beta_3$] (C11) at (7.65,-0.5);
\coordinate[label=left: $\beta_i$] (C12) at (7.65,1.5);
\coordinate[label=left: $\alpha$] (C13) at (7.6,2.5);
\coordinate[label=left: $\beta_{i+1}$] (C14) at (7.65,3.5);
\coordinate[label=left: $\beta_m$] (C15) at (7.65,5.5);
\coordinate[label=left: $+$] (C) at (10,2);
\coordinate[label=below: $g_1$] (D1) at (11,-0.5);
\coordinate[label=right: $g_2$] (D2) at (11,0.5);
\coordinate[label=right: $g_3$] (D3) at (11,1.5);
\coordinate[label=right: $g_{m-1}$] (D4) at (11,2.5);
\coordinate[label=right: $\tau_\alpha^{-1}g_m$] (D5) at (11,3.5);
\coordinate[label=right: $f$] (D6) at (11,4.5);
\draw (D1) -- (D3);
\draw[loosely dotted] (D3) -- (D4);
\draw (D4) -- (D6);
\foreach \n in {D1,D2,D3,D4,D5,D6} \node at (\n)[circle,fill,inner sep=1.5pt]{};
\coordinate[label=left: $\beta_2$] (D7) at (11.15,0);
\coordinate[label=left: $\beta_3$] (D8) at (11.15,1);
\coordinate[label=left: $\beta_m$] (D9) at (11.15,3);
\coordinate[label=left: $\alpha$] (D10) at (11.1,4);
\end{tikzpicture}
\]
Reattaching the first edge from the second term in Eq.~\eqref{lemma:inductivestep}, and adding in the first term, Eq.~\eqref{lemma:inductivestep} becomes
\begin{equation}\label{lemma:inductivestep2}
\begin{aligned}
\begin{tikzpicture}[thick]
\coordinate[label=below: $\tau_{\beta_1}a$] (A1) at (2,-1);
\coordinate[label=right: $\tau_{\beta_1}^{-1}f$] (A2) at (2,0);
\coordinate[label=right: $g_1$] (A3) at (2,1);
\coordinate[label=right: $g_2$] (A4) at (2,2);
\coordinate[label=right: $g_{m-1}$] (A5) at (2,3);
\coordinate[label=right: $g_m$] (A6) at (2,4);
\draw (A1) -- (A4);
\draw[loosely dotted] (A4) -- (A5);
\draw (A5) -- (A6);
\foreach \n in {A1,A2,A3,A4,A5,A6} \node at (\n)[circle,fill,inner sep=1.5pt]{};
\coordinate[label=left: $\alpha$] (A7) at (2.1,-0.5);
\coordinate[label=left: $\beta_1$] (A8) at (2.15,0.5);
\coordinate[label=left: $\beta_2$] (A9) at (2.15,1.5);
\coordinate[label=left: $\beta_m$] (A10) at (2.15,3.5);
\coordinate[label=left: $+$] (A) at (3.5,1.5);
\coordinate[label=below: $\tau_\alpha a$] (B0) at (4.5,-1.5);
\coordinate[label=right: $\tau_\alpha^{-1}g_1\tau_{\beta_2}$] (B1) at (4.5,-0.5);
\coordinate[label=right: $\tau_{\beta_2}^{-1}f$] (B2) at (4.5,0.5);
\coordinate[label=right: $g_2$] (B3) at (4.5,1.5);
\coordinate[label=right: $g_3$] (B4) at (4.5,2.5);
\coordinate[label=right: $g_{m-1}$] (B5) at (4.5,3.5);
\coordinate[label=right: $g_m$] (B6) at (4.5,4.5);
\draw (B0) -- (B4);
\draw[loosely dotted] (B4) -- (B5);
\draw (B5) -- (B6);
\foreach \n in {B0,B1,B2,B3,B4,B5,B6} \node at (\n)[circle,fill,inner sep=1.5pt]{};
\coordinate[label=left: $\alpha$] (B7) at (4.6,0);
\coordinate[label=left: $\beta_2$] (B8) at (4.65,1);
\coordinate[label=left: $\beta_3$] (B9) at (4.65,2);
\coordinate[label=left: $\beta_m$] (B10) at (4.65,4);
\coordinate[label=left: $\beta_1$] (B11) at (4.65,-1);
\coordinate[label=left: $+$] (B) at (6,1.5);
\coordinate[label=left: $\displaystyle \sum_{i=2}^{m-1}$] (B') at (7,1.5);
\coordinate[label=below: $\tau_\alpha a$] (C0) at (7.5,-3);
\coordinate[label=right: $g_1$] (C1) at (7.5,-2);
\coordinate[label=right: $g_2$] (C2) at (7.5,-1);
\coordinate[label=right: $g_3$] (C3) at (7.5,0);
\coordinate[label=right: $g_{i-1}$] (C4) at (7.5,1);
\coordinate[label=right: $\tau_\alpha^{-1}g_i\tau_{\beta_{i+1}}$] (C5) at (7.5,2);
\coordinate[label=right: $\tau_{\beta_{i+1}}^{-1}f$] (C6) at (7.5,3);
\coordinate[label=right: $g_{i+1}$] (C7) at (7.5,4);
\coordinate[label=right: $g_{m-1}$] (C8) at (7.5,5);
\coordinate[label=right: $g_m$] (C9) at (7.5,6);
\draw (C0) -- (C3);
\draw[loosely dotted] (C3) -- (C4);
\draw (C4) -- (C7);
\draw[loosely dotted] (C7) -- (C8);
\draw (C8) -- (C9);
\foreach \n in {C0,C1,C2,C3,C4,C5,C6,C7,C8,C9} \node at (\n)[circle,fill,inner sep=1.5pt]{};
\coordinate[label=left: $\beta_2$] (C10) at (7.65,-1.5);
\coordinate[label=left: $\beta_3$] (C11) at (7.65,-0.5);
\coordinate[label=left: $\beta_i$] (C12) at (7.65,1.5);
\coordinate[label=left: $\alpha$] (C13) at (7.6,2.5);
\coordinate[label=left: $\beta_{i+1}$] (C14) at (7.65,3.5);
\coordinate[label=left: $\beta_m$] (C15) at (7.65,5.5);
\coordinate[label=left: $\beta_1$] (C16) at (7.65,-2.5);
\coordinate[label=left: $+$] (C) at (10,1.5);
\coordinate[label=below: $\tau_\alpha a$] (D0) at (11,-1.5);
\coordinate[label=right: $g_1$] (D1) at (11,-0.5);
\coordinate[label=right: $g_2$] (D2) at (11,0.5);
\coordinate[label=right: $g_3$] (D3) at (11,1.5);
\coordinate[label=right: $g_{m-1}$] (D4) at (11,2.5);
\coordinate[label=right: $\tau_\alpha^{-1}g_m$] (D5) at (11,3.5);
\coordinate[label=right: $f$] (D6) at (11,4.5);
\draw (D0) -- (D3);
\draw[loosely dotted] (D3) -- (D4);
\draw (D4) -- (D6);
\foreach \n in {D0,D1,D2,D3,D4,D5,D6} \node at (\n)[circle,fill,inner sep=1.5pt]{};
\coordinate[label=left: $\beta_2$] (D7) at (11.15,0);
\coordinate[label=left: $\beta_3$] (D8) at (11.15,1);
\coordinate[label=left: $\beta_m$] (D9) at (11.15,3);
\coordinate[label=left: $\alpha$] (D10) at (11.1,4);
\coordinate[label=left: $\beta_1$] (D11) at (11.15,-1);
\end{tikzpicture}
\end{aligned}
\end{equation}
Notice that 
\[
\begin{tikzpicture}[thick]
\coordinate[label=below: $\tau_\alpha a$] (B0) at (4.5,-1.5);
\coordinate[label=right: $\tau_\alpha^{-1}g_1\tau_{\beta_2}$] (B1) at (4.5,-0.5);
\coordinate[label=right: $\tau_{\beta_2}^{-1}f$] (B2) at (4.5,0.5);
\coordinate[label=right: $g_2$] (B3) at (4.5,1.5);
\coordinate[label=right: $g_3$] (B4) at (4.5,2.5);
\coordinate[label=right: $g_{m-1}$] (B5) at (4.5,3.5);
\coordinate[label=right: $g_m$] (B6) at (4.5,4.5);
\draw (B0) -- (B4);
\draw[loosely dotted] (B4) -- (B5);
\draw (B5) -- (B6);
\foreach \n in {B0,B1,B2,B3,B4,B5,B6} \node at (\n)[circle,fill,inner sep=1.5pt]{};
\coordinate[label=left: $\alpha$] (B7) at (4.6,0);
\coordinate[label=left: $\beta_2$] (B8) at (4.65,1);
\coordinate[label=left: $\beta_3$] (B9) at (4.65,2);
\coordinate[label=left: $\beta_m$] (B10) at (4.65,4);
\coordinate[label=left: $\beta_1$] (B11) at (4.65,-1);
\coordinate[label=left: $+$] (B) at (6,1.5);
\coordinate[label=left: $\displaystyle \sum_{i=2}^{m-1}$] (B') at (7,1.5);
\coordinate[label=below: $\tau_\alpha a$] (C0) at (7.5,-3);
\coordinate[label=right: $g_1$] (C1) at (7.5,-2);
\coordinate[label=right: $g_2$] (C2) at (7.5,-1);
\coordinate[label=right: $g_3$] (C3) at (7.5,0);
\coordinate[label=right: $g_{i-1}$] (C4) at (7.5,1);
\coordinate[label=right: $\tau_\alpha^{-1}g_i\tau_{\beta_{i+1}}$] (C5) at (7.5,2);
\coordinate[label=right: $\tau_{\beta_{i+1}}^{-1}f$] (C6) at (7.5,3);
\coordinate[label=right: $g_{i+1}$] (C7) at (7.5,4);
\coordinate[label=right: $g_{m-1}$] (C8) at (7.5,5);
\coordinate[label=right: $g_m$] (C9) at (7.5,6);
\draw (C0) -- (C3);
\draw[loosely dotted] (C3) -- (C4);
\draw (C4) -- (C7);
\draw[loosely dotted] (C7) -- (C8);
\draw (C8) -- (C9);
\foreach \n in {C0,C1,C2,C3,C4,C5,C6,C7,C8,C9} \node at (\n)[circle,fill,inner sep=1.5pt]{};
\coordinate[label=left: $\beta_2$] (C10) at (7.65,-1.5);
\coordinate[label=left: $\beta_3$] (C11) at (7.65,-0.5);
\coordinate[label=left: $\beta_i$] (C12) at (7.65,1.5);
\coordinate[label=left: $\alpha$] (C13) at (7.6,2.5);
\coordinate[label=left: $\beta_{i+1}$] (C14) at (7.65,3.5);
\coordinate[label=left: $\beta_m$] (C15) at (7.65,5.5);
\coordinate[label=left: $\beta_1$] (C16) at (7.65,-2.5);
\coordinate[label=left: $\text{=}$] (C) at (10,1.5);
\coordinate[label=left: $\displaystyle \sum_{i=1}^{m-1}$] (C') at (11.5,1.5);
\coordinate[label=below: $\tau_\alpha a$] (D1) at (12,-2.5);
\coordinate[label=right: $g_1$] (D2) at (12,-1.5);
\coordinate[label=right: $g_2$] (D3) at (12,-0.5);
\coordinate[label=right: $g_{i-1}$] (D4) at (12,0.5);
\coordinate[label=right: $\tau_\alpha^{-1}g_i\tau_{\beta_{i+1}}$] (D5) at (12,1.5);
\coordinate[label=right: $\tau_{\beta_{i+1}}^{-1}f$] (D6) at (12,2.5);
\coordinate[label=right: $g_{i+1}$] (D7) at (12,3.5);
\coordinate[label=right: $g_{m-1}$] (D8) at (12,4.5);
\coordinate[label=right: $g_m$] (D9) at (12,5.5);
\draw (D1) -- (D3);
\draw[loosely dotted] (D3) -- (D4);
\draw (D4) -- (D7);
\draw[loosely dotted] (D7) -- (D8);
\draw (D8) -- (D9);
\foreach \n in {D1,D2,D3,D4,D5,D6,D7,D8,D9} \node at (\n)[circle,fill,inner sep=1.5pt]{};
\coordinate[label=left: $\beta_1$] (D10) at (12.15,-2);
\coordinate[label=left: $\beta_2$] (D11) at (12.15,-1);
\coordinate[label=left: $\beta_i$] (D12) at (12.15,1);
\coordinate[label=left: $\alpha$] (D13) at (12.1,2);
\coordinate[label=left: $\beta_{i+1}$] (D14) at (12.15,3);
\coordinate[label=left: $\beta_m$] (D15) at (12.15,5);
\end{tikzpicture}
\]
Thus Eq.~\eqref{lemma:inductivestep2} is equivalent to
\[
\begin{tikzpicture}[thick]
\coordinate[label=below: $\tau_{\beta_1}a$] (B1) at (4.5,-0.5);
\coordinate[label=right: $\tau_{\beta_1}^{-1}f$] (B2) at (4.5,0.5);
\coordinate[label=right: $g_1$] (B3) at (4.5,1.5);
\coordinate[label=right: $g_2$] (B4) at (4.5,2.5);
\coordinate[label=right: $g_{m-1}$] (B5) at (4.5,3.5);
\coordinate[label=right: $g_m$] (B6) at (4.5,4.5);
\draw (B1) -- (B4);
\draw[loosely dotted] (B4) -- (B5);
\draw (B5) -- (B6);
\foreach \n in {B1,B2,B3,B4,B5,B6} \node at (\n)[circle,fill,inner sep=1.5pt]{};
\coordinate[label=left: $\alpha$] (B7) at (4.6,0);
\coordinate[label=left: $\beta_1$] (B8) at (4.65,1);
\coordinate[label=left: $\beta_2$] (B9) at (4.65,2);
\coordinate[label=left: $\beta_m$] (B10) at (4.65,4);
\coordinate[label=left: $+$] (B) at (6,2);
\coordinate[label=left: $\displaystyle \sum_{i=1}^{m-1}$] (B') at (7,2);
\coordinate[label=below: $\tau_\alpha a$] (C1) at (7.5,-2);
\coordinate[label=right: $g_1$] (C2) at (7.5,-1);
\coordinate[label=right: $g_2$] (C3) at (7.5,0);
\coordinate[label=right: $g_{i-1}$] (C4) at (7.5,1);
\coordinate[label=right: $\tau_\alpha^{-1}g_i\tau_{\beta_{i+1}}$] (C5) at (7.5,2);
\coordinate[label=right: $\tau_{\beta_{i+1}}^{-1}f$] (C6) at (7.5,3);
\coordinate[label=right: $g_{i+1}$] (C7) at (7.5,4);
\coordinate[label=right: $g_{m-1}$] (C8) at (7.5,5);
\coordinate[label=right: $g_m$] (C9) at (7.5,6);
\draw (C1) -- (C3);
\draw[loosely dotted] (C3) -- (C4);
\draw (C4) -- (C7);
\draw[loosely dotted] (C7) -- (C8);
\draw (C8) -- (C9);
\foreach \n in {C1,C2,C3,C4,C5,C6,C7,C8,C9} \node at (\n)[circle,fill,inner sep=1.5pt]{};
\coordinate[label=left: $\beta_1$] (C10) at (7.65,-1.5);
\coordinate[label=left: $\beta_2$] (C11) at (7.65,-0.5);
\coordinate[label=left: $\beta_i$] (C12) at (7.65,1.5);
\coordinate[label=left: $\alpha$] (C13) at (7.6,2.5);
\coordinate[label=left: $\beta_{i+1}$] (C14) at (7.65,3.5);
\coordinate[label=left: $\beta_m$] (C15) at (7.65,5.5);
\coordinate[label=left: $+$] (C) at (10,2);
\coordinate[label=below: $\tau_\alpha a$] (D1) at (11,-0.5);
\coordinate[label=right: $g_1$] (D2) at (11,0.5);
\coordinate[label=right: $g_2$] (D3) at (11,1.5);
\coordinate[label=right: $g_{m-1}$] (D4) at (11,2.5);
\coordinate[label=right: $\tau_\alpha^{-1}g_m$] (D5) at (11,3.5);
\coordinate[label=right: $f$] (D6) at (11,4.5);
\draw (D1) -- (D3);
\draw[loosely dotted] (D3) -- (D4);
\draw (D4) -- (D6);
\foreach \n in {D1,D2,D3,D4,D5,D6} \node at (\n)[circle,fill,inner sep=1.5pt]{};
\coordinate[label=left: $\beta_1$] (D7) at (11.15,0);
\coordinate[label=left: $\beta_2$] (D8) at (11.15,1);
\coordinate[label=left: $\beta_m$] (D9) at (11.15,3);
\coordinate[label=left: $\alpha$] (D10) at (11.1,4);
\end{tikzpicture}
\]
This is simply the right-hand side of Eq.~\eqref{eq:iteratedlemma}, completing the proof.
\end{proof}


\section{Conclusion}
Our goal in this project was to construct an algorithm which reduces integral equations with products to their equivalent operator linear form.  Previously such a reduction was done on a case-by-case basis and had been proved possible in general using non-constructive methods.  The algorithm we construct gives us a systematic approach for this reduction that can be applied to any Volterra integral equation with separable kernels.  This work has several possibilities for continuation.  We would like to study the computational complexity of the algorithm, e.g. to determine the number of steps of the algorithm based on the data of the input forest.  In addition, we plan to adapt the algorithm to more general classes of integral equations.  For example, sum-separable kernels, i.e. kernels that can be written as a sum of separable terms such as $\cos(x-t)=\cos(x)\cos(t)+\sin(x)\sin(t)$, appear often in applications.  The integral operator with sum-separable kernels does not satisfy the twisted Rota-Baxter identity that it does when it contains separable kernels, and thus a new approach must be taken to handle such a case.

\section*{Acknowledgements}
This research was partially sponsored by the Manhattan College Jasper Scholar Summer Research Scholars program.

\end{document}